\algrenewcommand\algorithmicdo{}   
\newcommand{\curvyarrows}{%
  \begin{tikzpicture}[baseline=-0.4ex, shorten <=1pt, shorten >=1pt]
    \draw[-{Stealth[length=2mm]}] (0,0.12) to[bend left=12] (0.7,0.12);
    \draw[-{Stealth[length=2mm]}] (0.7,-0.12) to[bend left=12] (0,-0.12);
  \end{tikzpicture}%
}
\numberwithin{equation}{section}
\newtheorem{thm}{Theorem}[section]
\newtheorem{lem}[thm]{Lemma}
\newtheorem{prop}[thm]{Proposition}
\newtheorem{cor}[thm]{Corollary}
\theoremstyle{definition}
\newtheorem{defn}[thm]{Definition}
\theoremstyle{remark}
\newtheorem{rem}[thm]{Remark}
\newtheorem{claim}[thm]{Claim}
\DeclareMathOperator{\Var}{Var}
\DeclareMathOperator{\Cov}{Cov}
\newcommand{\E}{\mathbb{E}}
\newcommand{\Prb}{\mathbb{P}}
\title{Improved lower bounds for the maximum order of an induced acyclic subgraph}
\author{Shamil Asgarli}
\address{Dept. of Mathematics \& Computer Science, Santa Clara University, Santa Clara, CA 95053.}\email{sasgarli@scu.edu}
\author{Donald Falkenhagen}
\address{Monte Sereno, CA 95030} \email{donaldfalkenhagen@gmail.com}
\author{Kaya Hoshi}
\address{School of Engineering, Santa Clara University, Santa Clara, CA 95053.}
\email{khoshi@scu.edu}
\subjclass[2020]{Primary 05C69; Secondary 05C20, 05C85, 05D40}
\keywords{maximum induced acyclic subgraph; acyclic set; feedback vertex set; Caro--Wei bound; probabilistic method; Bhatia--Davis inequality.}
\begin{document}

\begin{abstract}
Computing the cardinality of a maximum induced acyclic vertex set in a digraph is NP-hard. Since finding an exact solution is computationally difficult, a fruitful approach is to establish high-quality lower bounds that are efficiently computable. We build on the Akbari--Ghodrati--Jabalameli--Saghafian (AGJS) bound for digraphs by adapting refinement techniques used by (a) Selkow and Harant--Mohr and (b) Angel--Campigotto--Laforest in their respective improvements of the Caro--Wei bound for undirected graphs. First, inspired by (a), we prove a neighborhood-based refinement of the AGJS bound that incorporates local degree data of each vertex. Second, inspired by (b), we compute the variance of the size of a feedback vertex set returned by a randomized algorithm. This result, combined with the Bhatia--Davis inequality, yields a tighter lower bound than the AGJS bound. 
\end{abstract}

\maketitle

\section{Introduction}

Finding the independence number $\alpha(G)$ of an undirected graph (the size of a maximum set of pairwise nonadjacent vertices) is a foundational problem in extremal graph theory. Computing $\alpha(G)$ is one of the classic NP-hard problems. A classical result of Tur\'an~\cite{turan1941} gives the lower bound $\alpha(G) \ge n/(1+\bar{d})$, where $\bar{d}$ is the average degree. As a refinement of Tur\'an's bound, Caro \cite{caro1979} and Wei \cite{wei1981} independently discovered an elegant lower bound in terms of the degree sequence:
\begin{equation}\label{eq:CaroWei}
\alpha(G) \;\geq\; \sum_{v\in V(G)} \frac{1}{1+d(v)}.
\end{equation}
The standard proof is a direct application of the probabilistic method \cite{alon2016}: consider a uniformly random permutation $\pi$ of the vertices $V(G)$. Construct an independent set $I$ by including every vertex $v$ that appears before all of its neighbors in $\pi$. By linearity of expectation, the expected size of this set is precisely the sum on the right-hand side of \eqref{eq:CaroWei}. Since the maximum size is at least the expected size of a randomly generated instance, the bound \eqref{eq:CaroWei} follows.

The Caro--Wei bound inspired more sophisticated bounds:
\begin{itemize} \item \textbf{Neighborhood-based refinements:} Selkow \cite{selkow1994} proposed strengthening the Caro--Wei bound by applying it again to the residual graph formed by vertices at distance at least two from the initial independent set $I$. Harant and Mohr \cite{harant2017} identified a subtle error in Selkow's argument and provided a correct proof of the following bound:
\[
\alpha(G) \ge \sum_{v\in V(G)} \frac{1}{1+d(v)}\left(1+\max\left\{0, \frac{d(v)}{d(v)+1} - \sum_{u\in N(v)}\frac{1}{1+d(u)}\right\}\right).
\]
The improvement is based on the local degree distribution around each vertex.
\item \textbf{Variance-based refinements:} Another approach examines the second moment of the size of a randomly generated set. Angel, Campigotto, and Laforest \cite{acl2013} analyzed a random process (the LL algorithm, which stands for ``ListLeft'' \cite{acl2011}) for producing vertex covers. This is complementary to constructing independent sets. By applying the Bhatia--Davis inequality \cite{bhatia2000} to the size of the random vertex cover $S$ generated by this algorithm, they derived the bound:
\[
\alpha(G) \geq \left(\sum_{v\in V(G)}\frac{1}{1+d(v)}\right) + \frac{\Var(|S|)}{\left(\sum_{v\in V(G)}\frac{1}{1+d(v)}\right) - c},
\]
where $\Var(|S|)$ is the variance of the random cover size and $c$ is the number of connected components of $G$.
\end{itemize}
Both refinements go beyond the first-moment method and use additional graph structure to tighten the bound \eqref{eq:CaroWei}.

\subsection*{Extension to digraphs}
These concepts extend naturally to directed graphs, where the analog of an independent set is an \emph{acyclic set} (a set of vertices that induces a subgraph with no directed cycles). The cardinality of the largest acyclic set is denoted by $\vv{\alpha}(D)$. Computing $\vv{\alpha}(D)$ is also NP-hard. 
Akbari, Ghodrati, Jabalameli, and Saghafian (AGJS) \cite{akbari2017} established the following directed analog of the Caro--Wei bound:
\begin{equation}\label{eq:AGJS}
\vv{\alpha}(D) \;\ge\; \sum_{v\in V(D)} \rho_D(v).
\end{equation}
The term $\rho_D(v)$ arises from a probabilistic argument on a random vertex permutation. A vertex $v$ is selected to be in the acyclic set if it appears before all of its out-neighbors (event $A_v$) \emph{or} before all of its in-neighbors (event $B_v$). The probability of this union event is computed by:
\begin{equation}\label{eq:rho_D(v)}
\rho_D(v) \colonequals \Prb(A_v \cup B_v) = \Prb(A_v) + \Prb(B_v) - \Prb(A_v \cap B_v) = \frac{1}{1+d_v^+} + \frac{1}{1+d_v^-} - \frac{1}{1+d_v},
\end{equation}
where $d_v^+$, $d_v^-$, and $d_v$ are the out-degree, in-degree, and total degree of the vertex $v$, respectively (see Preliminaries for precise definitions). Crucially, the set of vertices satisfying the event $A_v \cup B_v$ forms a random acyclic set. We include a self-contained proof in Section~\ref{sec:selkow} and use this construction as our starting point. The bound \eqref{eq:AGJS} is the first-order estimate for $\vv{\alpha}(D)$, playing the same role as the Caro--Wei bound does for $\alpha(G)$. For comparison, Gruber \cite{Gru11} proved the weaker bound:
\[
\vv{\alpha}(D) \;\ge\; \sum_{v\in V(D)} \frac{1}{1+d_v^{+}}.
\]

\subsection*{Our contributions}
To the best of our knowledge, the aforementioned refinements for the Caro--Wei bound have not been extended to the directed graph setting. This paper fills that gap. We obtain both the neighborhood-based and variance-based improvements, yielding two new lower bounds on $\vv{\alpha}(D)$ that strengthen the AGJS bound \eqref{eq:AGJS}. 
\begin{itemize}[leftmargin=1.7em]
\item \textbf{Neighborhood-based approach.} Following Selkow's idea \cite{selkow1994} (with the correction of Harant--Mohr \cite{harant2017}), we prove a neighborhood-based improvement for digraphs by applying the bound \eqref{eq:AGJS} to a random residual subgraph. See Theorem~\ref{thm:main_selkow}.
\item \textbf{Variance-based approach.} Inspired by Angel--Campigotto--Laforest \cite{acl2013}, we define the \emph{DLL algorithm}, a randomized procedure for generating a feedback vertex set $S$. We show that the probability a vertex is \emph{not} in $S$ equals $\rho_D(v)$, thus linking our algorithm to the bound \eqref{eq:AGJS}. We then derive an exact formula for $\Var(|S|)$ via a detailed covariance analysis and apply the Bhatia--Davis inequality \cite{bhatia2000} to obtain our second refinement. See Theorem~\ref{thm:main_acl}.
\end{itemize}

\subsection*{Preliminaries} Before proceeding, we formally define the terminology used throughout this paper. We work with finite simple loopless digraphs: for any ordered pair of vertices $(u,v)$, at most one arc $u\to v$ is allowed.\footnote{We use the term \emph{arc} for a directed edge; an arc $(u,v)$ means the edge $u\to v$.} In particular, both arcs $u\to v$ and $v\to u$ may be present simultaneously. Given a digraph $D$, we write $V(D)$ and $A(D)$ for the vertex and arc sets, respectively. 

For a vertex $v$, its in-neighborhood, out-neighborhood, and neighborhood are denoted $N^-(v)\colonequals\{w \mid (w,v)\in A(D) \}$, $N^+(v)\colonequals\{w \mid (v, w)\in A(D)\}$, and $N(v)\colonequals N^-(v)\cup N^+(v)$, respectively. The \emph{in-degree} $d_v^-$ and \emph{out-degree} $d_v^+$ are the cardinalities of these neighborhoods. We define the \emph{total degree} of $v$, denoted $d_v$, as the size of its neighborhood: $d_v \colonequals |N(v)|$. For conciseness, we use the subscript notation $d_v$ rather than $d(v)$. Note that this definition may differ from the sum $d_v^-+d_v^+$, since we allow directed 2-cycles (that is, a vertex $u$ may belong to both $N^-(v)$ and $N^+(v)$). Defining $d_v$ as the neighborhood size (rather than the number of incident arcs) ensures that the probabilistic formulas in this paper are valid for all simple digraphs. For a set of vertices $S\subseteq V(D)$, its neighborhood is:
$$
N(S) \colonequals \bigcup_{x\in S} N(x).
$$
The \emph{weakly connected components} of $D$ are the connected components of its underlying undirected graph; for brevity, we will simply call them \emph{connected components}.

A vertex subset $T \subseteq V(D)$ is called an \emph{acyclic set} if the induced subgraph $D[T]$ contains no directed cycle. The maximum size of such a set is denoted by $\vv{\alpha}(D)$. A \emph{feedback vertex set} (FVS) is a set of vertices $S$ such that $D-S$ is acyclic; the minimum size of such a set is denoted $\vv{\beta}(D)$. These two complementary parameters satisfy the identity
\begin{equation}\label{eq:complementary-identity}
\vv{\alpha}(D)+\vv{\beta}(D)=|V(D)|.
\end{equation} 
\noindent\textbf{Terminology.} In some sources, ``Maximum Acyclic Subgraph'' refers to an \emph{arc}-deletion problem; throughout, we study the \emph{vertex}-induced version. For a comprehensive treatment of digraph theory, we refer the reader to Bang-Jensen and Gutin~\cite{bangjensen2009}.

The \emph{dichromatic number}, introduced by Neumann-Lara \cite{neumannlara1982}, is the minimum number of acyclic color classes needed to partition $V(D)$. While we do not study colorings here, the quantity $\vv{\alpha}(D)$ is an extremal parameter in that theory (that is, the maximum possible size of a color class), analogous to the independence number in traditional graph coloring. For recent related work on acyclic and dichromatic number bounds for oriented graphs, see Aboulker, Havet, Pirot, and Schabanel~\cite{aboulker2025}.

\subsection*{Outline of the paper}
The remainder of the paper is organized as follows. Section~\ref{sec:selkow} develops the neighborhood-based lower bound for $\vv{\alpha}(D)$. Section~\ref{sec:DLL} introduces our DLL algorithm and proves its fundamental properties. Section~\ref{sec:variance} provides the variance decomposition and a complete covariance catalog for the DLL algorithm. Section~\ref{sec:bd} assembles the variance-based lower bound for $\vv{\alpha}(D)$. Finally, Section~\ref{sec:experiments} offers numerical experiments on certain types of random graphs to illustrate the behavior of our improvements.

\subsection*{Acknowledgment} We are grateful to the referee for useful references and helpful comments that improved the exposition. We also thank the referee for suggesting that we expand our experiments, which led us to add Subsection~\ref{subsec:scale-free}.

\section{A neighborhood-based refinement for digraphs}\label{sec:selkow}

The main idea of the neighborhood-based improvement is to apply the AGJS bound~\eqref{eq:AGJS} to a \emph{residual subgraph}, namely, the subgraph that remains after removing an initial acyclic set and its neighbors. Since we consider degrees in a subgraph, we first need to understand how the quantity $\rho_D(v)$ from \eqref{eq:rho_D(v)} behaves as degrees decrease.

\begin{lem}[Monotonicity]\label{lem:monotonicity} Consider the function 
\[
f(x,y, z)\colonequals \frac{1}{1+x}+\frac{1}{1+y}-\frac{1}{1+x+y-z} 
\]
on the domain $x\geq 0$, $y\geq 0$, $0\leq z\leq \min(x,y)$. Suppose $a, b, c, d, s, t$ are nonnegative integers such that $a\leq c$, $b\leq d$, and $s\leq t$. Then $f(a,b,s) \geq f(c,d,t)$.
\end{lem}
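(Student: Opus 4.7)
The plan is to reduce the inequality to one-variable monotonicity of $f$ in each of its three arguments, and then chain three elementary comparisons along the path
\[
(a,b,s)\;\longrightarrow\;(c,b,s)\;\longrightarrow\;(c,d,s)\;\longrightarrow\;(c,d,t),
\]
changing one coordinate at a time. This sidesteps any joint analysis and turns the lemma into three transparent single-variable claims.

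To prove monotonicity in $x$, I would use the algebraic identity $\tfrac{1}{1+x}-\tfrac{1}{1+x+(y-z)}=\tfrac{y-z}{(1+x)(1+x+y-z)}$, which is legitimate since $y-z\ge 0$ on the domain. Rewriting gives
\[
f(x,y,z)=\frac{1}{1+y}+\frac{y-z}{(1+x)(1+x+y-z)},
\]
and with $y,z$ fixed both factors $(1+x)$ and $(1+x+y-z)$ in the denominator are strictly increasing in $x$, so the second summand (with nonnegative numerator) is non-increasing in $x$. The symmetric rewrite, with the roles of $x$ and $y$ swapped, handles monotonicity in $y$. For the third variable, direct differentiation yields $\partial f/\partial z=-(1+x+y-z)^{-2}$, which is strictly negative, giving strict decrease in $z$.

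What remains is a brief bookkeeping check that each intermediate triple on the path still satisfies $0\le z\le\min(x,y)$. The endpoints $(a,b,s)$ and $(c,d,t)$ lie in the domain by hypothesis, so in particular $s\le\min(a,b)$ and $t\le\min(c,d)$. At $(c,b,s)$ we have $s\le\min(a,b)\le\min(c,b)$; at $(c,d,s)$ we have $s\le a\le c$ and $s\le b\le d$; and the last step $(c,d,s)\to(c,d,t)$ keeps $z\le t\le\min(c,d)$. Chaining the three monotonicities gives $f(a,b,s)\ge f(c,b,s)\ge f(c,d,s)\ge f(c,d,t)$, which is the claim.

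The main obstacle is really only this domain-tracking along the path; the underlying calculus is elementary. A reasonable stylistic alternative would be to prove each single-variable inequality directly by clearing denominators in the integer setting, but the rewriting above seems cleaner and makes the non-negativity of $y-z$ (equivalently $x-z$) do all the work.
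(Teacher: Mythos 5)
Your proof is correct and follows essentially the same route as the paper: establish that $f$ is nonincreasing in each of $x$, $y$, $z$ separately, then chain the three one-variable comparisons. The paper does this by computing all three partial derivatives and checking they are nonpositive on the domain (using $y\ge z$ and $x\ge z$), whereas you replace the $x$- and $y$-derivatives by the algebraic rewrite $f(x,y,z)=\tfrac{1}{1+y}+\tfrac{y-z}{(1+x)(1+x+y-z)}$ and its symmetric counterpart; this is a cosmetic variant, and your explicit domain-tracking along $(a,b,s)\to(c,b,s)\to(c,d,s)\to(c,d,t)$ simply makes precise a step the paper leaves implicit.
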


\begin{proof}
We compute the partial derivatives:
\begin{align*}
\frac{\partial f}{\partial x} &\;=\; -\frac{1}{(1+x)^2} + \frac{1}{(1+x+y-z)^2} \;\le\; 0, \\ 
\frac{\partial f}{\partial y} &\;=\; -\frac{1}{(1+y)^2} + \frac{1}{(1+x+y-z)^2} \;\le\; 0. \\
\frac{\partial f}{\partial z} &\;=\; -\frac{1}{(1+x+y-z)^2} \;\le\; 0.
\end{align*}
Thus, $f$ is nonincreasing in each variable. Since $a\le c$, $b\le d$, and $s\le t$, we conclude that $f(a,b,s)\geq f(c,d,t)$.
\end{proof}

The function in Lemma~\ref{lem:monotonicity} matches $\rho_D(v)$ in \eqref{eq:rho_D(v)}. For a vertex $v$, we have $d_v=|N(v)|=d_v^{+} + d_v^{-}-|N^+(v)\cap N^-(v)|$ since by convention we count a neighbor once, even if it is both an in- and an out-neighbor. Setting $x=d_v^+$, $y=d_v^-$, and $z=|N^+(v)\cap N^-(v)|$ gives $\rho_D(v)=f(x,y,z)$.

We now prove one of the main results of our paper.

\begin{thm}[Neighborhood-based refinement]\label{thm:main_selkow}
Let $D$ be a digraph and let $\rho_D$ be the function defined in \eqref{eq:rho_D(v)}. Then
\begin{equation}\label{eq:selkow-digraph}
\vv{\alpha}(D)\;\ge\;\sum_{v\in V(D)} \rho_D(v)\left(1+\max\left\{0,\;1-\rho_D(v)-\sum_{u\in N(v)}\rho_D(u)\right\}\right).
\end{equation}
\end{thm}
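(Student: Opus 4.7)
The plan is to adapt the Harant--Mohr refinement of Selkow's argument to the directed setting. I would generate a random acyclic set via a uniform permutation and apply the AGJS bound~\eqref{eq:AGJS} again to the residual subgraph obtained by removing that set together with its neighborhood, thereby harvesting an extra per-vertex contribution beyond the AGJS sum.

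First I would fix a uniformly random permutation $\pi$ of $V(D)$ and set $I \colonequals \{v : A_v \cup B_v \text{ holds under } \pi\}$, using the events from~\eqref{eq:rho_D(v)}. From the AGJS construction (established self-contained elsewhere in this section), $I$ is an acyclic set with $\Prb(v \in I) = \rho_D(v)$, so $\E[|I|] = \sum_v \rho_D(v)$. Next, let $R \colonequals V(D) \setminus (I \cup N(I))$. Since $R$ avoids $N(I)$, no arc of $D$ has one endpoint in $I$ and the other in $R$; hence, for any acyclic $S \subseteq R$, the union $I \cup S$ is still acyclic in $D$ (any directed cycle would have to use a forbidden crossing arc). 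Thus deterministically
\[
\vv{\alpha}(D) \;\geq\; |I| + \vv{\alpha}(D[R]).
\]

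The key step is to apply AGJS pointwise to $D[R]$ and invoke Lemma~\ref{lem:monotonicity}: the out-degree, in-degree, and 2-cycle count of each $v$ in $D[R]$ are each dominated by the corresponding quantities in $D$, so $\rho_{D[R]}(v) \geq \rho_D(v)$ for every $v \in R$. This yields $\vv{\alpha}(D[R]) \geq \sum_{v \in R} \rho_D(v)$. Taking expectations and introducing indicators,
\[
\vv{\alpha}(D) \;\geq\; \sum_v \rho_D(v) + \sum_v \rho_D(v)\,\Prb(v \in R).
\]
Finally, $v \in R$ iff no vertex in $\{v\} \cup N(v)$ lies in $I$, and a union bound together with $\Prb(u \in I) = \rho_D(u)$ gives $\Prb(v \in R) \geq 1 - \rho_D(v) - \sum_{u \in N(v)} \rho_D(u)$. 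Combined with the trivial lower bound $\Prb(v \in R) \geq 0$, this produces the $\max\{0,\,\cdot\,\}$ factor, and substitution gives the claimed inequality.

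I expect the main obstacle to be the monotonicity step $\rho_{D[R]}(v) \geq \rho_D(v)$. Because the paper's convention uses $d_v = |N(v)|$ rather than $d_v^+ + d_v^-$ in order to accommodate 2-cycles, $\rho_D$ depends on three quantities whose behavior under vertex deletion must all be tracked simultaneously; Lemma~\ref{lem:monotonicity} is engineered precisely for this three-variable bookkeeping, but the application must be done with care. A secondary delicate point is verifying that no arcs connect $I$ to $R$, which underwrites the deterministic inequality $\vv{\alpha}(D) \geq |I| + \vv{\alpha}(D[R])$ before any expectations are taken.
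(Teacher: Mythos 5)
Your proposal is correct and follows the paper's proof essentially step for step: a random acyclic set $I$ from a uniform permutation, the residual subgraph $R = V(D)\setminus(I\cup N(I))$, the deterministic inequality $\vv{\alpha}(D)\geq |I|+\vv{\alpha}(D[R])$, AGJS on the residual together with Lemma~\ref{lem:monotonicity} to get $\rho_{D[R]}(v)\geq\rho_D(v)$, and the union bound truncated at zero. The two subtleties you flag are exactly the ones the paper addresses and neither causes difficulty, since all three degree-type quantities ($d_v^+$, $d_v^-$, and $|N^+(v)\cap N^-(v)|$) can only decrease upon vertex deletion, and the absence of $I$--$R$ arcs is immediate from $R$ avoiding $N(I)$.
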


\begin{proof}
Let $\pi$ be a uniformly random permutation of $V(D)$. For each $v\in V(D)$, define the events:
\[
A_v \colonequals \text{all } N^+(v)\text{ appear after } v \text{ in } \pi, \qquad
B_v \colonequals \text{all } N^-(v)\text{ appear after } v \text{ in } \pi.
\]
Let $I$ be the random set of vertices for which the event $A_v\cup B_v$ occurs.

We claim that $I$ is acyclic. We show that no directed cycle can have all its vertices in $I$. Let $C = (v_1 \to v_2 \to \dots \to v_k \to v_1)$ be an arbitrary directed cycle in the graph $D$. Consider the vertex on this cycle, say $v_j$, that appears \emph{last} in the random permutation $\pi$. We show that $v_j\notin I$.

For $v_j$ to be included in $I$, the event $A_{v_j} \cup B_{v_j}$ must occur. However, both events are impossible:
\begin{itemize}
    \item The event $A_{v_j}$ requires all out-neighbors of $v_j$ to appear \emph{after} $v_j$ in $\pi$. But the successor of $v_j$ on the cycle, $v_{j+1}$ (indices modulo $k$), is an out-neighbor that must appear \emph{before} $v_j$ in the permutation, since $v_j$ is the last vertex of the cycle to appear in $\pi$. Thus, $A_{v_j}$ cannot occur.
    
    \item The event $B_{v_j}$ requires all in-neighbors of $v_j$ to appear \emph{after} $v_j$ in $\pi$. But the predecessor of $v_j$ on the cycle, $v_{j-1}$ (indices modulo $k$), is an in-neighbor that must also appear \emph{before} $v_j$ in the permutation, since $v_j$ is the last vertex of the cycle to appear in $\pi$. Thus, $B_{v_j}$ cannot occur.
\end{itemize}
Thus, the union event $A_{v_j}\cup B_{v_j}$ cannot occur, meaning $v_j\notin I$. We have shown that every directed cycle $C$ in the graph has at least one vertex that is not in $I$. Therefore, $I$ does not induce any directed cycles, that is, $I$ is acyclic.

Next, we apply linearity of expectation and formula \eqref{eq:rho_D(v)} for $\Prb(A_v\cup B_v)$ to obtain:
\begin{equation}\label{eq:E-A1}
\E[|I|] \;=\; \sum_{v\in V(D)} \Prb(A_v\cup B_v) \;=\; \sum_{v\in V(D)} \rho_D(v).
\end{equation}
Now, define the random residual subgraph $H$ by
\[
V(H)\;=\; V(D)\setminus\bigl(I\cup N(I)\bigr).
\]
For any $\pi$, the sets $I$ and $V(H)$ are disjoint and there are no arcs between them. Thus, the union of an acyclic set from $D[I]$ and an acyclic set from $H$ is acyclic in $D$. This yields the inequality:
\[
\vv{\alpha}(D)\;\ge\; |I| + \vv{\alpha}(H).
\]
Taking expectations, we have
\begin{equation}\label{eq:alpha-lb}
\vv{\alpha}(D) \;\ge\; \E[|I|] + \E[\vv{\alpha}(H)] = \sum_{v\in V(D)} \rho_D(v) + \E[\vv{\alpha}(H)].
\end{equation}
We now bound $\E[\vv{\alpha}(H)]$ from below by applying the bound \eqref{eq:AGJS} to the random subgraph $H$. Observe that $\rho_D(v)$ can be expressed as:
\[
\rho_D(v) = \frac{1}{1+d_v^{+}} + \frac{1}{1+d_v^{-}} -  \frac{1}{1+d_v^{+}+d_v^{-}-t_{v}} 
\]
where $t_v = |N^{+}(v)\cap N^{-}(v)|$. For any vertex $v\in V(H)$, its in- and out-degree in $H$ do not exceed its in- and out-degree in $D$; moreover, $|N^{+}_{H}(v)\cap N^{-}_{H}(v)| \leq |N^{+}_{D}(v)\cap N^{-}_{D}(v)|$. Therefore, by Lemma~\ref{lem:monotonicity}, we have $\rho_H(v) \ge \rho_D(v)$. Thus,
\begin{align}
\E[\vv{\alpha}(H)]
&\ge \E\Bigl[\sum_{v\in V(H)} \rho_H(v)\Bigr]
\;\ge\; \E\Bigl[\sum_{v\in V(H)} \rho_D(v)\Bigr]
\;=\; \sum_{v\in V(D)} \rho_D(v)\cdot \Prb(v\in V(H)). \label{eq:EH-bound}
\end{align}
Combining \eqref{eq:alpha-lb} and \eqref{eq:EH-bound} gives
\begin{equation}\label{ineq:selkow-alpha(D)}
\vv{\alpha}(D)\;\ge\; \sum_{v\in V(D)} \rho_D(v)\,\Bigl(1 + \Prb(v\in V(H))\Bigr).
\end{equation}
The final step is to bound $\Prb(v\in V(H))$ from below. A vertex $v$ is \emph{not} in $V(H)$ if and only if $v \in I \cup N(I)$. Using the union bound,
\[
\Prb(v\notin V(H)) \;\le\; \Prb(v\in I) + \sum_{u\in N(v)} \Prb(u\in I)
\;=\; \rho_D(v) + \sum_{u\in N(v)} \rho_D(u).
\]
This implies
\begin{equation}\label{eq:Prb(v-in-H)}
\Prb(v\in V(H)) \;=\; 1 - \Prb(v\notin V(H)) \;\ge\; 1 - \rho_D(v) - \sum_{u\in N(v)} \rho_D(u).
\end{equation}
Combining the trivial inequality $\Prb(v\in V(H))\geq 0$ with \eqref{eq:Prb(v-in-H)}, we obtain:
\begin{equation}\label{eq:v-in-H}
\Prb(v\in V(H))\;\ge\; \max\left\{0, \; 1 - \rho_D(v) - \sum_{u\in N(v)} \rho_D(u)\right\}.
\end{equation}
Finally, substituting the bound \eqref{eq:v-in-H} for $\Prb(v\in V(H))$ back into our main inequality \eqref{ineq:selkow-alpha(D)} for $\vv{\alpha}(D)$ yields
\[
\vv{\alpha}(D)\;\ge\;\sum_{v\in V(D)} \rho_D(v)\Bigl(1+\max\{0,\;1-\rho_D(v)-\sum_{u\in N(v)}\rho_D(u)\}\Bigr),
\]
which is precisely the inequality \eqref{eq:selkow-digraph} claimed in the theorem.
\end{proof}

\section{A variance-based approach via feedback vertex sets}\label{sec:DLL}

Our next goal is to develop a variance-based refinement in the spirit of Angel, Campigotto, and Laforest (ACL)~\cite{acl2013}. This requires a random variable whose expectation recovers the AGJS bound \eqref{eq:AGJS} from Akbari--Ghodrati--Jabalameli--Saghafian~\cite{akbari2017}. Our strategy is to construct such a variable by analyzing the problem from a complementary perspective: instead of building a random \emph{acyclic set}, we define a randomized procedure that generates a feedback vertex set (FVS). 

To implement our strategy, this section introduces the DLL algorithm.\footnote{The name ``DLL algorithm'' is chosen as a counterpart to the ``LL algorithm'' from the undirected setting \cite{acl2013}; ``DLL'' is an abbreviation for \textbf{D}irected \textbf{L}ist\textbf{L}eft.} The analysis is organized as follows. In \S\ref{subsec:dl_definition}, we formally define the algorithm and prove its correctness (that it always produces an FVS). In \S\ref{subsec:dl_structural}, we establish its key structural properties, showing that it produces optimal solutions and deriving a worst-case bound on its output size. Finally, in \S\ref{subsec:dl_akbari}, we compute the expected FVS size and show that it recovers the AGJS bound \eqref{eq:AGJS}.

\subsection{The DLL algorithm and its correctness}\label{subsec:dl_definition}

Our algorithm applies a simple local rule to the vertices based on a random permutation; we formalize this as a labeling. Throughout this section, we define a \emph{labeling} as a bijection $L\colon V(D)\to \{1,\dots,n\}$ where $n=|V(D)|$ is the number of vertices. We imagine a labeling as arranging the vertices in a line, with labels increasing from left to right.

\begin{defn}[Right neighbors for a labeling]
Given a labeling $L\colon V(D)\to \{1,\dots,n\}$, a neighbor $w$ of a vertex $u$ is called a \emph{right in-neighbor} if $w \to u$ is an arc and $L(u) < L(w)$. Similarly, $w$ is a \emph{right out-neighbor} if $u \to w$ is an arc and $L(u) < L(w)$.

We denote the set of all right in-neighbors and right out-neighbors of $u$ by $R^-(u)$ and $R^+(u)$, respectively. Formally, these sets are given by:
\[
R^-(u) \colonequals \{w \in N^-(u) \mid L(u) < L(w) \}
\]
and
\[
R^+(u) \colonequals \{w \in N^+(u) \mid L(u) < L(w) \}.
\]
\end{defn}

The DLL algorithm uses the existence of both types of right neighbors to decide which vertices to place in its output set.

\begin{defn}[DLL algorithm]
Let $L$ be a uniformly random labeling of $V(D)$. The \emph{DLL algorithm} outputs a set $S\subseteq V(D)$ defined by the following rule for each vertex $u$:
\[
u\in S \quad \Longleftrightarrow \quad
\bigl(\exists\ \text{a right in-neighbor of $u$}\bigr)\ \text{and}\ \bigl(\exists\ \text{a right out-neighbor of $u$}\bigr).
\]
\end{defn}

\begin{algorithm}[H]
\caption{DLL: A randomized FVS procedure (input: digraph $D$)}
\begin{algorithmic}[1]
\State Let $L\colon V(D)\to\{1,\dots,n\}$ be a uniform random labeling.
\State $S\gets \varnothing$.
\ForAll{$u\in V(D)$}
  \If{$(\exists w\in N^-(u): L(u)<L(w))$ \textbf{and} $(\exists w\in N^+(u): L(u)<L(w))$}
    \State $S\gets S\cup\{u\}$.
  \EndIf
\EndFor
\State \Return $S$.
\end{algorithmic}
\end{algorithm}

First, we confirm that the procedure is correct: its output is always a feedback vertex set.

\begin{lem}[DLL produces an FVS]\label{lem:DLisFVS}
For any labeling $L$, the set $S$ returned by the DLL algorithm is a feedback vertex set.
\end{lem}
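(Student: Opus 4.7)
My plan is to show that every directed cycle in $D$ contains at least one vertex of $S$, so removing $S$ leaves an acyclic digraph. The key observation is dual to the one used in the proof of Theorem~\ref{thm:main_selkow}: there, the \emph{last} vertex of a cycle under $\pi$ was shown to be outside the acyclic set $I$; here, I will show that the \emph{first} vertex of a cycle under $L$ must lie in the feedback vertex set $S$.

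More precisely, fix the random labeling $L$ and let $C = v_1 \to v_2 \to \dots \to v_k \to v_1$ be an arbitrary directed cycle in $D$. Let $v^\ast$ be the vertex of $C$ carrying the smallest label. Its cyclic predecessor $u$ on $C$ is an in-neighbor of $v^\ast$ with $L(u) > L(v^\ast)$, and its cyclic successor $w$ is an out-neighbor of $v^\ast$ with $L(w) > L(v^\ast)$. Hence $u \in R^-(v^\ast)$ and $w \in R^+(v^\ast)$, so both right-neighbor sets are nonempty. By the inclusion rule of the DL algorithm, $v^\ast \in S$.

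Since an arbitrary cycle was chosen, every directed cycle of $D$ has at least one vertex in $S$, so $D - S$ contains no directed cycle. Therefore $S$ is a feedback vertex set, as required.

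I do not foresee any significant obstacle: the argument is essentially a single extremal-vertex observation on each cycle. The only point worth flagging is the mirroring with the proof of Theorem~\ref{thm:main_selkow} — one must take the \emph{minimum-labeled} vertex of $C$ (rather than the maximum) because the DL rule is defined in terms of \emph{right} neighbors, reversing the role of the extremal vertex.
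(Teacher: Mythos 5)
Your proof is correct and uses essentially the same argument as the paper: take the minimum-labeled vertex on an arbitrary directed cycle, observe that its cycle-predecessor and cycle-successor are a right in-neighbor and a right out-neighbor respectively, and conclude that this vertex lies in $S$. The remark about mirroring Theorem~\ref{thm:main_selkow} (minimum versus maximum) is accurate but incidental; there is no substantive difference from the paper's proof.
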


\begin{proof}
We show directly that the subgraph $D-S$ is acyclic. It suffices to show that for any directed cycle $\gamma$ in the original graph $D$, at least one of its vertices must belong to $S$.

Let $\gamma$ be an arbitrary directed cycle in $D$. Consider the vertex on this cycle, say $v$, that has the \emph{minimal} label under the given labeling $L$. Since $v$ is part of a cycle, it must have an in-neighbor $x$ and an out-neighbor $y$ that are also on $\gamma$. By our choice of $v$ as the vertex with the minimum label on the cycle, it must be that $L(v) < L(x)$ and $L(v) < L(y)$.
\begin{itemize}
    \item Since $x \to v$ is an arc and $L(v) < L(x)$, it follows that $x$ is a right in-neighbor of $v$.
    \item Since $v \to y$ is an arc and $L(v) < L(y)$, it follows that $y$ is a right out-neighbor of $v$.
\end{itemize}
By the definition of the DLL algorithm, any vertex that has both a right in-neighbor and a right out-neighbor is included in the set $S$. Therefore, $v \in S$. 

Since we have shown that every directed cycle in $D$ contains at least one vertex that must be in $S$, no cycle can exist within the complement $D-S$. Thus, $D-S$ is acyclic, and $S$ is a feedback vertex set.
\end{proof}

\subsection{Structural properties of the DLL algorithm}\label{subsec:dl_structural} The DLL algorithm can produce a minimum FVS. The proof of this property relies on the following elementary lemma regarding the structure of \emph{minimal} feedback vertex sets, which we state here for completeness.

\begin{lem}[Criticality of minimal FVS]\label{lem:critical}
If $S^\ast$ is a minimal feedback vertex set (with respect to inclusion), then for every $v\in S^\ast$, the digraph $D-(S^\ast\setminus\{v\})$ contains a directed cycle that passes through $v$.
\end{lem}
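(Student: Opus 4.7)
The plan is to argue in two steps, leveraging both the minimality of $S^\ast$ and the defining property of an FVS. The minimality hypothesis gives us, for each $v \in S^\ast$, that $S^\ast \setminus \{v\}$ fails to be an FVS, so there must exist some directed cycle somewhere in $D-(S^\ast\setminus\{v\})$. The FVS property of $S^\ast$ itself then pins down the location of such a cycle: it must pass through $v$.

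First I would invoke the definition of a minimal FVS. Since $S^\ast$ is minimal with respect to inclusion, no proper subset of $S^\ast$ is an FVS; in particular, $S^\ast \setminus \{v\}$ is not an FVS. By the definition of a feedback vertex set, this means $D - (S^\ast \setminus \{v\})$ contains at least one directed cycle, call it $\gamma$. This establishes existence; it remains to show $\gamma$ passes through $v$.

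Second I would argue by contradiction. Suppose $\gamma$ does not pass through $v$. Then every vertex of $\gamma$ lies in $V(D) \setminus S^\ast$, which means $\gamma$ is a directed cycle in $D - S^\ast$. But $S^\ast$ is an FVS, so $D - S^\ast$ is acyclic, a contradiction. Therefore $\gamma$ must use $v$, which is exactly the claim.

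The argument is entirely structural and requires no case analysis or computation, so there is no real obstacle; the main thing to make precise is the pair of deletions: $D-(S^\ast\setminus\{v\})$ is obtained from $D-S^\ast$ by reinserting $v$ along with all its incident arcs, and any new cycle created by this reinsertion must therefore traverse $v$.
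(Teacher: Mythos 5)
Your proof is correct and is essentially identical to the paper's: both use minimality to produce a cycle in $D-(S^\ast\setminus\{v\})$ and then observe that a cycle avoiding $v$ would survive in the acyclic graph $D-S^\ast$. The only cosmetic difference is that you phrase the second step as a proof by contradiction, while the paper states it directly.
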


\begin{proof}
Let $v \in S^\ast$. By the minimality of $S^\ast$, the set $S' \colonequals S^\ast\setminus\{v\}$ is not a feedback vertex set. This means the subgraph $D-S'$ must contain at least one directed cycle. Any such cycle must necessarily pass through the vertex $v$, because all other vertices of $S^\ast$ are still in $S'$, and any cycle not involving $v$ would have already existed in the acyclic graph $D-S^\ast$.
\end{proof}

With these preliminaries, we can now establish the fundamental properties of the DLL algorithm. We begin by showing that the algorithm is powerful enough, in principle, to find an optimal solution.

\begin{prop}[Optimality of the DLL algorithm]\label{prop:DL_optimality}
For any digraph $D$, there exists a labeling $L^\ast$ for which the DLL algorithm returns a minimum feedback vertex set.
\end{prop}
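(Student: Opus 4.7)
The plan is to exhibit a deterministic labeling that forces the DL algorithm's output to coincide with some fixed minimum feedback vertex set. Let $S^\ast$ be any minimum FVS of $D$, and set $T\colonequals V(D)\setminus S^\ast$, so that $D[T]$ is acyclic. Since $S^\ast$ is minimum, it is also minimal, and Lemma~\ref{lem:critical} applies.

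First, I would construct $L^\ast$ in two blocks. Place every vertex of $S^\ast$ at the bottom, assigning the labels $\{1,\dots,|S^\ast|\}$ to them in some arbitrary order. Then fix a topological order $t_1,t_2,\dots,t_m$ of $D[T]$ (so that every arc $t_i\to t_j$ in $D[T]$ satisfies $i<j$), and assign labels in reverse: set $L^\ast(t_i)\colonequals |S^\ast|+(m-i+1)$. This way every vertex of $T$ receives a strictly larger label than every vertex of $S^\ast$, and within $T$ the topological order is reversed.

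Next, I would show $T\cap S=\varnothing$ under $L^\ast$. Take any $t_i\in T$ and any out-neighbor $w$ of $t_i$. If $w\in S^\ast$, then $L^\ast(w)\le |S^\ast|<L^\ast(t_i)$. If $w=t_j\in T$, then the arc $t_i\to t_j$ in $D[T]$ forces $i<j$, hence $L^\ast(t_j)=|S^\ast|+(m-j+1)<|S^\ast|+(m-i+1)=L^\ast(t_i)$. In either case, no right out-neighbor of $t_i$ exists, so the DL rule excludes $t_i$ from $S$.

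Then I would show $S^\ast\subseteq S$. Fix $v\in S^\ast$. By Lemma~\ref{lem:critical}, the digraph $D-(S^\ast\setminus\{v\})$ contains a directed cycle $\gamma$ through $v$. Every vertex of $\gamma$ other than $v$ lies in $T$; in particular, the predecessor $x$ and successor $y$ of $v$ on $\gamma$ satisfy $x\in N^-(v)\cap T$ and $y\in N^+(v)\cap T$ (possibly $x=y$, e.g.\ a 2-cycle). Since every vertex of $T$ has a larger $L^\ast$-label than any vertex of $S^\ast$, we have $L^\ast(v)<L^\ast(x)$ and $L^\ast(v)<L^\ast(y)$, so $v$ has both a right in-neighbor and a right out-neighbor, hence $v\in S$. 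Combining this with the previous step gives $S=S^\ast$, and since $|S^\ast|=\vv{\beta}(D)$, the DL algorithm returns a minimum FVS under $L^\ast$.

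The only delicate point is verifying that the cycle guaranteed by Lemma~\ref{lem:critical} really supplies an in-neighbor and an out-neighbor of $v$ lying in $T$ rather than in $S^\ast$; this is automatic because the cycle is contained in $D-(S^\ast\setminus\{v\})$, whose vertex set is $T\cup\{v\}$, and a directed cycle through $v$ must have at least one predecessor and at least one successor of $v$ in that set. The rest is bookkeeping about label comparisons.
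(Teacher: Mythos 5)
Your proof is correct and takes essentially the same route as the paper: place $S^\ast$ at the bottom of the labeling, place $T=V(D)\setminus S^\ast$ above it using a topological order, show $T\cap S=\varnothing$ by the absence of right neighbors on one side, and use Lemma~\ref{lem:critical} to show $S^\ast\subseteq S$. The only (cosmetic) difference is orientation: you reverse the topological order so vertices of $T$ have no right \emph{out}-neighbors, whereas the paper uses the topological order directly so vertices of $T$ have no right \emph{in}-neighbors; both are mirror images of the same argument.
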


\begin{proof} The proof is constructive: given a minimum feedback vertex set $S^{\ast}$, we construct a specific labeling $L^\ast$ and show that the DLL algorithm with this labeling returns exactly the set $S^\ast$. To define the labeling $L^\ast$, we partition the vertices $V(D)$ into two sets, $S^\ast$ and its complement $V(D)\setminus S^\ast$, and assign labels as follows:
\begin{itemize}
    \item Vertices $v \in S^\ast$ are assigned the smallest available labels, that is,
    \[ L^\ast(v) \in \{1, 2, \dots, |S^\ast|\}. \]
    \item The remaining vertices $u \in V(D)\setminus S^\ast$ induce an acyclic subgraph. We assign them the labels from $\{|S^\ast|+1, \dots, n\}$ according to a topological sort, which guarantees that $L^\ast(x) < L^\ast(y)$ for any arc $x \to y$ induced by $V(D)\setminus S^\ast$.
\end{itemize}
We analyze the output $S$ of the DLL algorithm with the labeling $L^\ast$ by considering the two cases.

\textbf{Case 1:} A vertex $u \notin S^\ast$.

We must show that DLL does not include $u$ in its output. It suffices to show that $u$ has no right in-neighbor with respect to the labeling $L^{\ast}$. Consider any in-neighbor $w \in N^-(u)$. There are two possibilities for $w$:
    \begin{itemize}
        \item If $w \in S^\ast$, then by our construction $L^\ast(w) \le |S^\ast| < L^\ast(u)$. Thus, $w$ is not a right in-neighbor of $u$.
        \item If $w \notin S^\ast$, then the arc $w \to u$ is induced by the acyclic set $V(D)\setminus S^\ast$. By the property of the topological sort, we must have $L^\ast(w) < L^\ast(u)$. Thus, $w$ is not a right in-neighbor of $u$.
    \end{itemize}
    In either case, $u$ has no right in-neighbors. Therefore, the algorithm does not add $u$ to $S$.

\textbf{Case 2:} A vertex $v \in S^\ast$.

We must show that DLL adds $v$ to its output. By Lemma~\ref{lem:critical}, since $S^\ast$ is a minimal FVS, there exists a directed cycle in $D-(S^\ast\setminus\{v\})$ that passes through $v$. Let $u \to v \to w$ be arcs on such a cycle. The vertices $u$ and $w$ are not in $S^\ast\setminus\{v\}$. Since $u$ and $w$ are neighbors of $v$, we infer that $u\neq v$ and $w\neq v$, which means $u, w \notin S^\ast$. Accordingly, our labeling construction guarantees the following ordering:
    \[ L^\ast(v) \le |S^\ast| \quad \text{and} \quad L^\ast(u), L^\ast(w) > |S^\ast|. \]
    This directly implies that $L^\ast(v)<L^\ast(u)$ and $L^\ast(v)<L^\ast(w)$. Therefore, $u$ is a right in-neighbor of $v$; similarly, $w$ is a right out-neighbor of $v$. As a result, the algorithm adds $v$ to $S$.

Since the output $S$ contains all vertices in $S^\ast$ and none from its complement, we have $S=S^\ast$. Thus, the DLL algorithm can indeed produce a minimum feedback vertex set.
\end{proof}

Next, we establish a deterministic upper bound on the size of the output set $S$, which holds for any random labeling and depends on the component structure of the graph.

\begin{prop}[Worst-case bound on FVS size]\label{prop:DL_bound}
Let $D$ be a digraph with $n$ vertices and $c$ connected components. For any labeling, the output $S$ of the DLL algorithm satisfies $|S|\le n-c$.
\end{prop}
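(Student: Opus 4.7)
My plan is to produce $c$ distinct vertices that are guaranteed to lie outside $S$, one per connected component. The key observation is that a vertex $u$ is excluded from $S$ as soon as it fails to have a right in-neighbor (the DL rule requires both a right in-neighbor and a right out-neighbor), so it suffices to exhibit, in each connected component, a vertex with no right in-neighbor.

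The natural candidate is the vertex with the \emph{maximum} label in the component. More precisely, let $C_1,\dots,C_c$ be the connected components of $D$, and for each $i$ let $v_i\in C_i$ be the vertex of $C_i$ with the largest value of $L$. Every neighbor $w$ of $v_i$ lies in the same component $C_i$ (since components are determined by the underlying undirected graph and $N(v_i)$ is, by definition, the union of the in- and out-neighborhoods), and so $L(w)<L(v_i)$ by the maximality of $v_i$ in $C_i$. In particular, $v_i$ has no right in-neighbor, so $v_i\notin S$.

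The vertices $v_1,\dots,v_c$ lie in pairwise distinct components and are therefore distinct. Hence $|V(D)\setminus S|\ge c$, which rearranges to the desired inequality $|S|\le n-c$. (If some component $C_i$ is a singleton, then its unique vertex $v_i$ has empty neighborhood and is trivially not in $S$, so the argument is unaffected.)

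I do not expect any real obstacle: the only mild subtlety is making sure that the argument works uniformly across all components, including isolated vertices and components consisting of a single directed 2-cycle or similar small configurations, but the use of the component-maximal labeled vertex handles all cases simultaneously without needing a case split.
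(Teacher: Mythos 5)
Your proof is correct and uses essentially the same argument as the paper: in each connected component, the vertex with the maximum label has all its neighbors to its left (since components are defined via the underlying undirected graph), so it fails the DL inclusion rule and must lie outside $S$, giving $c$ distinct excluded vertices. The only cosmetic difference is that the paper observes the maximal vertex has no right neighbors at all, whereas you note the weaker (but sufficient) fact that it has no right in-neighbor.
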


\begin{proof}
We establish a lower bound on $|V(D)\setminus S|$ by inspecting the highest-labeled vertex in each component. Consider any connected component $W$. We show that at least one vertex of $W$ does not belong to $S$. Let $v_{\max}$ be the vertex in $W$ with the largest label assigned by $L$. By definition, $v_{\max}$ has no neighbors with a larger label, so it has no right neighbors at all. Therefore, $v_{\max} \notin S$. Since at least $c$ vertices are not in $S$, we deduce that $|S|\le n-c$. \end{proof}

\subsection{Recovering the AGJS bound}\label{subsec:dl_akbari} We now show that the probability a vertex is excluded by the DLL algorithm is precisely the term  $\rho_{D}(v)$ from the AGJS bound \eqref{eq:AGJS}.

\begin{prop}[DLL probability identity]\label{prop:prob_identity}
For any vertex $v\in V(D)$,
\[
\Prb(v\notin S)
\,=\, \frac{1}{1+d_v^+}+\frac{1}{1+d_v^-}-\frac{1}{1+d_v}
\,=\, \rho_D(v).
\]
\end{prop}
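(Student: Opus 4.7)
The plan is to express the complement event $\{v \notin S\}$ as a union of two simple events about extremal positions in the random labeling, then apply inclusion--exclusion and evaluate each probability by the standard ``uniformly random maximum'' trick.

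First I would unpack the definition of the DL algorithm. By construction, $v \in S$ if and only if $v$ has at least one right in-neighbor \emph{and} at least one right out-neighbor. Negating gives
\[
\{v \notin S\} \;=\; \widetilde A_v \cup \widetilde B_v,
\]
where $\widetilde A_v$ is the event ``$v$ has no right out-neighbor,'' i.e.\ $L(w) < L(v)$ for every $w \in N^+(v)$, and $\widetilde B_v$ is the event ``$v$ has no right in-neighbor,'' i.e.\ $L(w) < L(v)$ for every $w \in N^-(v)$. By inclusion--exclusion,
\[
\Prb(v \notin S) \;=\; \Prb(\widetilde A_v) + \Prb(\widetilde B_v) - \Prb(\widetilde A_v \cap \widetilde B_v).
\]

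Next I would evaluate each of the three probabilities. The event $\widetilde A_v$ says that $v$ attains the largest label among the $1 + d_v^+$ vertices in $\{v\} \cup N^+(v)$; since $L$ is a uniformly random bijection, each of those vertices is equally likely to be the maximum, giving $\Prb(\widetilde A_v) = 1/(1+d_v^+)$. Symmetrically $\Prb(\widetilde B_v) = 1/(1+d_v^-)$. For the intersection, I would observe that $\widetilde A_v \cap \widetilde B_v$ says $v$ has the largest label among $\{v\} \cup N^+(v) \cup N^-(v) = \{v\} \cup N(v)$; this set has size $1 + d_v$, so the probability equals $1/(1+d_v)$. Substituting yields the claimed identity.

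The only subtle point — and essentially the main obstacle, though it is mild — is the intersection probability: one must not write $1/(1 + d_v^+ + d_v^-)$, since a vertex $w$ lying in both $N^+(v)$ and $N^-(v)$ (a 2-cycle partner of $v$) would then be counted twice. Taking the union $N^+(v) \cup N^-(v)$ collapses such duplicates, and this is precisely why the paper defines $d_v \colonequals |N(v)|$ rather than $d_v^+ + d_v^-$. Once this is handled, the three probabilities assemble into $\rho_D(v)$ exactly as in \eqref{eq:rho_D(v)}.
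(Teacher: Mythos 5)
Your proof is correct and follows essentially the same route as the paper's: negate the DL inclusion rule to write $\{v\notin S\}$ as a union of the two ``no right in-/out-neighbor'' events, apply inclusion--exclusion, and evaluate each term as the probability that $v$ is last among itself and the relevant neighborhood. The paper names the events $E_v^-$ and $E_v^+$ where you write $\widetilde B_v$ and $\widetilde A_v$, but the argument, including the care taken with $d_v=|N(v)|$ for the intersection term, is the same.
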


\begin{proof}
By the definition of the DLL algorithm, a vertex $v$ is \emph{not} included in $S$ if and only if it has no right in-neighbors or no right out-neighbors.

Let $E_v^-$ be the event that $v$ has no right in-neighbors (i.e., all vertices in $N^-(v)$ appear before $v$ in the random labeling $L$).
Let $E_v^+$ be the event that $v$ has no right out-neighbors (i.e., all vertices in $N^+(v)$ appear before $v$ in $L$).

We wish to compute $\Prb(v \notin S) = \Prb(E_v^- \cup E_v^+)$. By the principle of inclusion-exclusion, this is:
\[
\Prb(v \notin S) = \Prb(E_v^-) + \Prb(E_v^+) - \Prb(E_v^- \cap E_v^+).
\]
Each of these probabilities depends only on the relative ordering of $v$ and its neighbors. In a uniform random permutation:
\begin{itemize}
    \item $\Prb(E_v^-)$ is the probability that $v$ is the last element among the $d_v^- + 1$ vertices in $N^-(v) \cup \{v\}$. Thus, $\Prb(E_v^-) = \frac{1}{1+d_v^-}$.
    
    \item $\Prb(E_v^+)$ is the probability that $v$ is the last element among the $d_v^+ + 1$ vertices in $N^+(v) \cup \{v\}$. Thus, $\Prb(E_v^+) = \frac{1}{1+d_v^+}$.
    
    \item $\Prb(E_v^- \cap E_v^+)$ is the probability that $v$ is the last element among all $d_v + 1$ vertices in $N(v) \cup \{v\}$. Thus, $\Prb(E_v^- \cap E_v^+) = \frac{1}{1+d_v}$.
\end{itemize}
Substituting these values into the inclusion-exclusion formula gives the identity:
\[
\Prb(v\notin S) = \frac{1}{1+d_v^+}+\frac{1}{1+d_v^-}-\frac{1}{1+d_v} = \rho_D(v). \qedhere
\]
\end{proof}

\begin{cor}[Expected FVS size]\label{cor:DL_expectation}
The expected size of the set $S$ returned by the DLL algorithm is
\[ \E[|S|]= n - \sum_{v \in V(D)} \rho_D(v). \]
\end{cor}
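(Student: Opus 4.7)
The plan is to derive the corollary immediately from Proposition~\ref{prop:prob_identity} by linearity of expectation. For each vertex $v \in V(D)$, introduce the indicator random variable $X_v \colonequals \mathbf{1}[v \in S]$, so that $|S| = \sum_{v \in V(D)} X_v$ pointwise over labelings $L$. Taking expectations and using linearity gives $\E[|S|] = \sum_{v \in V(D)} \Prb(v \in S)$.

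Next, I would apply the complementary-event identity $\Prb(v \in S) = 1 - \Prb(v \notin S)$ to each term, and then invoke Proposition~\ref{prop:prob_identity}, which states that $\Prb(v \notin S) = \rho_D(v)$. Summing over all $n$ vertices yields
\[
\E[|S|] \;=\; \sum_{v \in V(D)} \bigl(1 - \rho_D(v)\bigr) \;=\; n \,-\, \sum_{v \in V(D)} \rho_D(v),
\]
which is exactly the claimed formula.

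There is no substantive obstacle here: the corollary is a one-line consequence of the previously established probability identity combined with linearity of expectation. The only things worth flagging are (i) linearity of expectation requires no independence between the $X_v$'s, which is important because the events $\{v \in S\}$ are highly correlated across neighboring vertices, and (ii) the formula is consistent with the complementary-identity \eqref{eq:complementary-identity}: since $S$ is always an FVS by Lemma~\ref{lem:DLisFVS}, the complement $V(D)\setminus S$ is acyclic, and $\E[|V(D)\setminus S|] = \sum_v \rho_D(v)$ recovers the AGJS bound \eqref{eq:AGJS} via $\vv{\alpha}(D) \geq \E[|V(D)\setminus S|]$.
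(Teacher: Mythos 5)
Your proposal is correct and follows exactly the paper's argument: express $|S|$ as a sum of indicators, apply linearity of expectation, pass to complements, and invoke Proposition~\ref{prop:prob_identity}. The additional remarks about correlations and consistency with \eqref{eq:complementary-identity} are sound but not needed for the corollary itself.
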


\begin{proof}
By linearity of expectation and Proposition~\ref{prop:prob_identity}, we have:
\[ \E[|S|] = \sum_{v \in V(D)} \Prb(v \in S) = \sum_{v \in V(D)} (1 - \Prb(v \notin S)) = n - \sum_{v \in V(D)} \rho_D(v). \qedhere \]
\end{proof}

This result provides an alternative derivation of the AGJS bound  \eqref{eq:AGJS}. The expected FVS size from Corollary~\ref{cor:DL_expectation} gives an upper bound on the minimum FVS size, $\vv{\beta}(D)$. Using the identity $\vv{\alpha}(D) = n - \vv{\beta}(D)$, this translates directly to the familiar lower bound on the acyclic number:
\[ 
\vv{\alpha}(D) = n - \vv{\beta}(D) \ge n - \E[|S|] = \sum_{v\in V(D)} \rho_D(v). 
\]
Thus, the first-order bound \eqref{eq:AGJS} is recovered, which confirms that our FVS-based approach is correctly calibrated. This consistency sets the stage for the variance-based second-order refinement developed in the next section.

\section{The variance of the DLL algorithm}\label{sec:variance}

Recall that the output of the DLL algorithm is a random feedback vertex set $S$. The previous section computed a formula for the expectation of the random variable $|S|$. To develop our variance-based refinement, we move beyond the first moment of $|S|$ and derive an explicit formula for its variance. We begin by expressing $|S|$ as a sum of indicator random variables. To achieve this, for each vertex $v$, let $\mathbb{I}_v$ be the indicator for the event $v \in S$. For two vertices $u$ and $v$, we use $\Prb(u,v\in S)$ to mean $\Prb(u\in S \text{ and } v\in S)$.  The variance of $|S|=\sum_v \mathbb{I}_v$ then decomposes into a sum of variances and covariances:
\begin{align}\label{eq:variance-decomp}
\Var(|S|) &= \sum_{v}\Var(\mathbb{I}_v)+\sum_{u\ne v}\Cov(\mathbb{I}_u,\mathbb{I}_v) \nonumber \\
&= \sum_v \Prb(v\in S)\Prb(v\notin S) + \sum_{u\ne v}\Bigl(\Prb(u,v\in S)-\Prb(u\in S)\Prb(v\in S)\Bigr) \nonumber \\
&= \sum_v \rho_D(v)\bigl(1-\rho_D(v)\bigr) + \sum_{u\ne v}\Bigl(\Prb(u,v\in S)-(1-\rho_D(u))(1-\rho_D(v))\Bigr).
\end{align}
In the last step, we used the identity $\Prb(v\in S)=1-\rho_D(v)$ from Proposition~\ref{prop:prob_identity}. We already have an explicit formula for $\rho_D(v)$ from \eqref{eq:rho_D(v)}. It remains to derive a formula for the joint probability $\Prb(u,v\in S)$ for any pair of distinct vertices, which is the focus of the present section.

This section is organized as follows. We begin in \S\ref{subsec:pie_setup} by outlining our strategy, which involves analyzing a complementary event using the Principle of Inclusion-Exclusion. To evaluate the terms in this expansion, we establish a general ordering lemma in \S\ref{subsec:g_lemma}. We then apply this framework to three scenarios: nonadjacent vertices in \S\ref{subsec:nonadj}, adjacent vertices with one arc in \S\ref{subsec:adj:one-arc}, and adjacent vertices with two arcs in \S\ref{subsec:adj:two-arcs}. Finally, in Theorem~\ref{thm:cov-catalog}, we synthesize the analysis into a final theorem: a complete recipe for the covariance.

\subsection{A strategy via inclusion-exclusion}\label{subsec:pie_setup}
A direct calculation of $\Prb(u,v\in S)$ seems difficult. The condition for inclusion requires the \emph{existence} of both a right in-neighbor and a right out-neighbor; this structure is not easy to work with. It is more tractable to analyze the complementary probability, $\Prb(u \notin S \text{ or } v \notin S)$. The condition for a vertex $w$ to be excluded (that \emph{all} its in-neighbors or \emph{all} its out-neighbors are to the left of $w$) is defined by universal quantifiers. This structure is ideally suited for a direct application of the Principle of Inclusion-Exclusion.

To formalize this strategy, we consider the following events for a vertex $x$ based on its ordering relative to its neighbors:
\begin{align*}
    E_x^- &= \{\text{all in-neighbors of $x$ appear before $x$}\}, \\ 
E_x^+ &= \{\text{all out-neighbors of $x$ appear before $x$}\}.
\end{align*}
As a reminder, saying that all in-/out-neighbors appear before $x$ is the same as saying $x$ has no right in-/out-neighbors. As discussed in the proof of Proposition~\ref{prop:prob_identity}, the event $x \notin S$ is equivalent to the union $E_x^-\cup E_x^+$. For two distinct vertices $u,v$, we are interested in the probability:
\[
\Psi(u,v) \colonequals \Prb(u\notin S \text{ or } v\notin S) = \Prb\bigl( (E_u^-\cup E_u^+)\cup (E_v^-\cup E_v^+)\bigr).
\]
Let $A_1\colonequals E_u^-$, $A_2\colonequals E_u^+$, $A_3\colonequals E_v^-$, and $A_4\colonequals E_v^+$. Since $\Psi(u,v)=\Prb(A_1\cup A_2\cup A_3\cup A_4)$, we can apply the inclusion-exclusion formula to this union probability to express:
\begin{equation}\label{eq:PIE-4}
\Psi(u,v) = \Sigma_1 - \Sigma_2 + \Sigma_3 - \Sigma_4,
\end{equation}
where $\Sigma_k$ is the sum of probabilities of all $k$-fold intersections of these events:
\begin{align*}
\Sigma_1 &= \Prb(A_1)+\Prb(A_2)+\Prb(A_3)+\Prb(A_4) \\ 
\Sigma_2 &= \Prb(A_1\cap A_2)+\Prb(A_1\cap A_3)+\cdots + \Prb(A_3\cap A_4) \quad\quad  (6 \text{ terms})\\
\Sigma_3 &= \Prb(A_1\cap A_2\cap A_3) + \Prb(A_1\cap A_2\cap A_4) + \Prb(A_1\cap A_3\cap A_4) + \Prb(A_2\cap A_3\cap A_4) \\ 
\Sigma_4 &= \Prb(A_1\cap A_2\cap A_3\cap A_4).
\end{align*}
The desired joint probability is then $\Prb(u,v\in S)=1-\Psi(u,v)$. The evaluation of each term is sensitive to the relationship between $u$ and $v$. We therefore bifurcate our analysis into two scenarios: the nonadjacent case and the adjacent cases. To facilitate these calculations, we denote neighborhood overlaps as:
\begin{align*}
n_{uv}^{--} &\colonequals |N^-(u)\cap N^-(v)|, & n_{uv}^{++} &\colonequals |N^+(u)\cap N^+(v)|, & n_{uv}^{-+} &\colonequals |N^-(u)\cap N^+(v)|, \\
n_{uv}^{+-} &\colonequals |N^+(u)\cap N^-(v)|, & n_{uv}^{\pm -} &\colonequals |N(u)\cap N^-(v)|, & n_{uv}^{-\pm} &\colonequals |N^-(u)\cap N(v)|, \\
n_{uv}^{\pm +} &\colonequals |N(u)\cap N^+(v)|, & n_{uv}^{+\pm} &\colonequals |N^+(u)\cap N(v)|, & n_{uv}^{\pm \pm} &\colonequals |N(u)\cap N(v)|.
\end{align*}

\subsection{A general ordering lemma}\label{subsec:g_lemma}
Before proceeding to the case analysis, we present a lemma for computing the probability of joint ordering events. 

\begin{lem}\label{lem:g_function}
Let $X,Y$ be two sets of vertices, and let $v,w$ be two distinct vertices not in $X\cup Y$. In a uniform random permutation of the vertices in $Z\colonequals X\cup Y\cup\{v,w\}$, the probability that all vertices of $X$ appear before $v$ and all vertices of $Y$ appear before $w$ is given by
\[
g(X,Y)\;=\; \frac{1}{|X|+|Y|-|X\cap Y|+2} \left( \frac{1}{|X|+1} + \frac{1}{|Y|+1} \right).
\]
\end{lem}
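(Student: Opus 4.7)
The plan is to exploit the symmetry between $v$ and $w$ by conditioning on which one occupies the later position, and then to reduce each resulting joint-ordering event to a single ``last element'' condition that is easy to evaluate in a uniform random permutation. Writing $\mathcal{E}$ for the event whose probability we want (all of $X$ before $v$, and all of $Y$ before $w$), I would split it as
\[
\mathcal{E} \;=\; \bigl(\mathcal{E}\cap\{L(w)<L(v)\}\bigr)\,\sqcup\,\bigl(\mathcal{E}\cap\{L(v)<L(w)\}\bigr),
\]
and handle each piece independently.

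The first step I would carry out is the key combinatorial reduction. Suppose $L(w)<L(v)$: since $w$ is later than every element of $Y$, we get $L(v)>L(w)>\max_{y\in Y}L(y)$, so $v$ is later than every element of $Y$; combined with the hypothesis that $v$ is later than every element of $X$, this forces $v$ to attain the maximum label in all of $Z$. Conversely, if $v$ is the maximum of $Z$ and all of $Y$ precedes $w$, then $\mathcal{E}$ and $L(w)<L(v)$ both hold. Hence
\[
\mathcal{E}\cap\{L(w)<L(v)\} \;=\; \{v\text{ is the max of }Z\}\cap\{L(y)<L(w)\text{ for all }y\in Y\}.
\]
The symmetric identification holds for the other half of the split with the roles of $v,w$ and $X,Y$ interchanged.

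Next, I would compute the probability of each reduced event. In a uniform permutation of the $|Z|=|X|+|Y|-|X\cap Y|+2$ vertices, the probability that $v$ is the maximum is $1/|Z|$. Conditional on this, the remaining $|Z|-1$ vertices are in a uniformly random order, and restricting attention to the $|Y|+1$ vertices of $Y\cup\{w\}$ shows that $w$ is the last of them with probability $1/(|Y|+1)$. So the first half contributes
\[
\frac{1}{|X|+|Y|-|X\cap Y|+2}\cdot\frac{1}{|Y|+1},
\]
and symmetrically the second half contributes the same expression with $1/(|Y|+1)$ replaced by $1/(|X|+1)$. Summing the two disjoint contributions yields the stated formula for $g(X,Y)$.

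The step that requires the most care is the combinatorial reduction in the first paragraph, in particular verifying that the two cases in the split are genuinely disjoint and together exhaust $\mathcal{E}$, and that the substitution of ``$v$ is the max of $Z$'' correctly absorbs both the constraint ``$v$ after $w$'' and the constraint ``$v$ after $X$'' without double-counting. Once this equivalence is nailed down, the probability computations are a routine consequence of the uniform distribution of relative orderings, and the overlap term $|X\cap Y|$ enters cleanly only through the cardinality $|Z|$.
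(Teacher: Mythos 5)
Your proof is correct and follows essentially the same strategy as the paper's: both condition on which of $v$ or $w$ occupies the later position (equivalently, which is the last element of $Z$), observe that for $\mathcal{E}$ to hold this forces that vertex to be the maximum of $Z$, and then reduce the remaining constraint to a single "last element among a smaller set" probability. Your reformulation of the case split as $L(w)<L(v)$ versus $L(v)<L(w)$ is a cosmetic variant of the paper's direct conditioning on the last element of $Z$; the two halves sum to the same expression.
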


\begin{proof}
Let us write $a \prec b$ to denote that vertex $a$ appears before vertex $b$ in a uniform random permutation of the vertices in $Z$. Let $x\colonequals |X|$, $y\colonequals |Y|$, $t\colonequals |X\cap Y|$, and let $N\colonequals |Z|=x+y-t+2$ be the total number of vertices. Denote by $X\prec v$ the event that all vertices of $X$ appear before $v$. We want to find the probability of the event $\mathcal{E}$, defined as $\{X \prec v \text{ and } Y \prec w\}$.

We condition on the last element, $z$, in the random permutation of $Z$. For $\mathcal{E}$ to occur, $z$ cannot be in $X$ (since $X \prec v$) or in $Y$ (since $Y \prec w$). By hypothesis, $v,w \notin X \cup Y$, so the only candidates for the last element are $v$ and $w$.

\textbf{Case 1:} The last element is $v$.

Any element of $Z$ is equally likely to be last, so $\Prb(z=v) = 1/N$. Conditional on $v$ being the last element, the condition $X \prec v$ is automatically satisfied. The event $\mathcal{E}$ thus reduces to the single condition $Y \prec w$. This depends only on the relative ordering of the $y+1$ vertices in $Y\cup\{w\}$; the event $Y \prec w$ holds if and only if $w$ is the last among them, which has probability $1/(y+1)$.

\textbf{Case 2:} The last element is $w$.

By a symmetric argument, $\Prb(z=w) = 1/N$. Conditional on $w$ being last, the condition $Y \prec w$ is satisfied. The event $\mathcal{E}$ now requires only $X \prec v$. This occurs if and only if $v$ is the last among the $x+1$ vertices in $X\cup\{v\}$, which has probability $1/(x+1)$.

Since these two cases are disjoint and are the only ways for $\mathcal{E}$ to occur, we sum their probabilities:
\begin{align*}
\Prb(\mathcal{E}) &= \Prb(z=v)\Prb(\mathcal{E}\mid z=v) + \Prb(z=w)\Prb(\mathcal{E}\mid z=w) \\
&= \frac{1}{N} \cdot \frac{1}{y+1} + \frac{1}{N} \cdot \frac{1}{x+1} = \frac{1}{x+y-t+2} \left( \frac{1}{x+1} + \frac{1}{y+1} \right). \qedhere
\end{align*}
\end{proof}

\subsection{Case 1: Nonadjacent vertices}\label{subsec:nonadj}
We begin with the simpler case where there is no arc between $u$ and $v$. The lack of a directed edge ensures that $u\notin N(v)$ and $v\notin N(u)$. Consequently, for any choice of neighborhoods $X \subseteq N(u)$ and $Y \subseteq N(v)$, the conditions of Lemma~\ref{lem:g_function} are met.

The four singleton probabilities for $\Sigma_1$ are given by the probability that a vertex is the last among its local neighborhood:
\begin{align*}
\Prb(E_u^-) &= \frac{1}{d_u^-+1}, & \Prb(E_v^-) &= \frac{1}{d_v^-+1}, \\
\Prb(E_u^+) &= \frac{1}{d_u^++1}, & \Prb(E_v^+) &= \frac{1}{d_v^++1}.
\end{align*}
The six pairwise intersection probabilities for $\Sigma_2$ consist of two self-pairs and four mixed-pairs. The self-pairs are:
\[
\Prb(E_u^-\cap E_u^+) = \frac{1}{d_u+1}, \qquad \Prb(E_v^-\cap E_v^+) = \frac{1}{d_v+1}.
\]
The four mixed-pairs are all direct applications of Lemma~\ref{lem:g_function}:
\begin{align*}
\Prb(E_u^-\cap E_v^-)&=g\bigl(N^-(u),N^-(v)\bigr), & \Prb(E_u^-\cap E_v^+)=g\bigl(N^-(u),N^+(v)\bigr),\\
\Prb(E_u^+\cap E_v^-)&=g\bigl(N^+(u),N^-(v)\bigr), & \Prb(E_u^+\cap E_v^+)=g\bigl(N^+(u),N^+(v)\bigr).
\end{align*}
Similarly, the four triple intersection probabilities for $\Sigma_3$ are given by:
\begin{align*}
\Prb(E_u^-\cap E_u^+\cap E_v^-)&=g\bigl(N(u), N^-(v)\bigr), & \Prb(E_u^-\cap E_u^+\cap E_v^+)=g\bigl(N(u), N^+(v)\bigr), \\
\Prb(E_u^-\cap E_v^-\cap E_v^+)&=g\bigl(N^-(u), N(v)\bigr), & \Prb(E_u^+\cap E_v^-\cap E_v^+)=g\bigl(N^+(u), N(v)\bigr).
\end{align*}
Finally, the single quadruple probability for $\Sigma_4$ is:
\[
\Prb(E_u^-\cap E_u^+\cap E_v^-\cap E_v^+) = g\bigl(N(u),N(v)\bigr).
\]
Substituting these 15 quantities into the inclusion-exclusion formula \eqref{eq:PIE-4} yields a closed-form expression for $\Prb(u,v\in S)$ in the nonadjacent case.

\subsection{Case 2: Adjacent vertices with one arc}\label{subsec:adj:one-arc}
Suppose we have $u\to v$ but $v\nrightarrow u$ (the case $v\to u$ and $u\nrightarrow v$ follows by symmetry). This implies $v \in N^+(u)$ and $u \in N^-(v)$, creating a dependency that invalidates the direct application of Lemma~\ref{lem:g_function} for some terms and makes certain joint events impossible. In particular, the arc $u\to v$ forces one pairwise intersection to be impossible.

\begin{claim}[An impossible intersection]\label{cl:impossible}
If $u\to v$ is an arc, the joint event $E_u^+\cap E_v^-$ cannot occur, so $\Prb(E_u^+\cap E_v^-)=0$.
\end{claim}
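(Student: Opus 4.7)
The plan is to observe that the two events impose incompatible ordering constraints on the single pair $(u,v)$, so their intersection is empty as a set of labelings (not merely a null event).

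In more detail, I would unpack each event using the hypothesis that $u\to v$ is an arc. This hypothesis puts $v\in N^+(u)$ and $u\in N^-(v)$. The event $E_u^+$ requires that every vertex of $N^+(u)$ appear before $u$ in the random labeling $L$; applied to the specific out-neighbor $v$, this forces $L(v) < L(u)$. Symmetrically, $E_v^-$ requires every vertex of $N^-(v)$ to appear before $v$; applied to the in-neighbor $u$, this forces $L(u) < L(v)$.

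These two inequalities cannot hold simultaneously for any labeling $L$, so $E_u^+ \cap E_v^- = \varnothing$ as subsets of the sample space of labelings, hence $\Prb(E_u^+ \cap E_v^-) = 0$. No obstacle is anticipated: the entire argument is a one-line extraction of a contradictory pair of constraints from the definitions of $E_u^+$ and $E_v^-$, and it relies only on the fact that $v \in N^+(u)$ and $u \in N^-(v)$, both immediate from the presence of the arc $u\to v$.
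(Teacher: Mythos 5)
Your proof is correct and follows essentially the same approach as the paper: extract the two incompatible ordering constraints $L(v) < L(u)$ (from $E_u^+$ and $v\in N^+(u)$) and $L(u) < L(v)$ (from $E_v^-$ and $u\in N^-(v)$), and conclude the intersection is empty. The paper phrases the orderings as $v\prec u$ and $u\prec v$, but the argument is identical.
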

\begin{proof}
The event $E_u^+$ requires all out-neighbors of $u$ to appear before $u$. Since $v \in N^+(u)$, this implies the relative order $v \prec u$. The event $E_v^-$ requires all in-neighbors of $v$ to appear before $v$. Since $u \in N^-(v)$, this implies $u \prec v$. These required orderings are mutually exclusive, so the joint event has probability zero.
\end{proof}

Claim~\ref{cl:impossible} has a cascading effect on the expansion \eqref{eq:PIE-4}, as the term $\Prb(E_u^+\cap E_v^-)$ and any higher-order term (in $\Sigma_3$ and $\Sigma_4$) containing this intersection will vanish. We now evaluate the remaining nonzero terms again under the assumption that $u\to v$ and $v\nrightarrow u$. The singleton probabilities for $\Sigma_1$ and the self-pair probabilities in $\Sigma_2$ are unaffected by the arc. We still have:
\[
\Prb(E_u^-\cap E_u^+) = \frac{1}{d_u+1}, \qquad \Prb(E_v^-\cap E_v^+) = \frac{1}{d_v+1}.
\]
Of the mixed-pairs, the term $\Prb(E_u^-\cap E_v^+)$ can still be calculated with Lemma~\ref{lem:g_function}:
\[
\Prb(E_u^-\cap E_v^+) = g(N^-(u), N^+(v)).
\]
The remaining two pairwise intersections, however, require a modified analysis.

\begin{claim}[Special pairwise intersections]\label{cl:keypairs}
Assume $u\to v$. The joint probabilities are:
\begin{align*}
\Prb(E_u^-\cap E_v^-) &= \frac{1}{\bigl(d_u^- + d_v^- - n_{uv}^{--} + 1\bigr)\,\bigl(d_u^-+1\bigr)}, \\
\Prb(E_u^+\cap E_v^+) &= \frac{1}{\bigl(d_u^+ + d_v^+ - n_{uv}^{++} + 1\bigr)\,\bigl(d_v^+ + 1\bigr)}.
\end{align*}
\end{claim}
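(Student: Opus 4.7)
The plan is to compute each joint probability by a two-step conditioning argument in the spirit of Lemma~\ref{lem:g_function}, but adapted to the dependency introduced by the arc $u\to v$. My first move will be to spell out what this arc forces inside each event: since $u\in N^-(v)$, the event $E_v^-$ internally demands $u\prec v$, and since $v\in N^+(u)$, the event $E_u^+$ internally demands $v\prec u$. This forced ordering is precisely why Lemma~\ref{lem:g_function} cannot be invoked directly---its hypothesis required both ``target'' vertices to lie outside the neighborhoods.

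For $\Prb(E_u^-\cap E_v^-)$ I would set $X\colonequals N^-(u)$, $Y\colonequals N^-(v)$, and $T\colonequals X\cup Y\cup\{v\}$, noting that $u\in Y$ and $v\notin X$, so that $|T|=d_u^-+d_v^--n_{uv}^{--}+1$. The key reformulation I plan to prove is the equivalence
\[
E_u^-\cap E_v^-\;\iff\;\bigl(v\text{ is last in }T\bigr)\,\wedge\,\bigl(u\text{ is last in }X\cup\{u\}\bigr).
\]
The forward direction will follow by transitivity: $X\prec u$ combined with the forced $u\prec v$ and with $(Y\setminus\{u\})\prec v$ pushes every vertex of $T\setminus\{v\}$ before $v$, while the reverse direction is immediate. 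Under a uniform random ordering, the first event has probability $1/|T|$, and conditioning on it leaves the remaining vertices uniformly ordered, so the second event contributes the conditionally independent factor $1/(|X|+1)=1/(d_u^-+1)$. Multiplying yields the claimed formula.

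The second probability will be handled by the symmetric argument with $X\colonequals N^+(u)$, $Y\colonequals N^+(v)$, and $T\colonequals X\cup Y\cup\{u\}$. Now $v\in X$ and $u\notin Y$, so the combined chain $Y\prec v$ and $X\prec u$ forces $u$ to be last in $T$ and reduces the residual condition to $v$ being last in $Y\cup\{v\}$, contributing a factor $1/(d_v^++1)$ alongside $1/|T|=1/(d_u^++d_v^+-n_{uv}^{++}+1)$.

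The only delicate point will be justifying the transitivity collapse cleanly: I must confirm that no element of $N^-(u)$ can appear after $v$ in the first case (any such element would have to come after $u$ by $E_u^-$, yet $u\prec v$ is forced by $E_v^-$, a contradiction), and symmetrically in the second case. Once the ``last-element'' equivalence is secured, both formulas drop out of the uniform random permutation model with no further casework.
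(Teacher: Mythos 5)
Your proof is correct and follows essentially the same conditioning strategy as the paper's: identify that $v$ (resp.\ $u$) must be last in the union set $T$, compute that probability as $1/|T|$, and multiply by the conditional probability of the residual condition $X\prec u$ (resp.\ $Y\prec v$), yielding the factor $1/(d_u^-+1)$ (resp.\ $1/(d_v^++1)$). One small slip in your parenthetical: you write that an element $w\in N^-(u)$ appearing after $v$ ``would have to come after $u$ by $E_u^-$,'' but $E_u^-$ forces $w\prec u$, not the reverse; the correct contradiction is that $w\prec u$ together with the forced $u\prec v$ gives $w\prec v$, contradicting $v\prec w$.
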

\begin{proof}
We prove the result for $\Prb(E_u^-\cap E_v^-)$; the proof for $\Prb(E_u^+\cap E_v^+)$ is symmetric.

Let $X=N^-(u)$ and $Y=N^-(v)$. We are interested in the event $\mathcal{E}$ where all of $X$ appear before $u$ (event $X \prec u$) and all of $Y$ appear before $v$ (event $Y \prec v$). Since there is an arc $u\to v$, we have $u \in N^-(v)=Y$. The condition $Y \prec v$ therefore forces the relative ordering $u \prec v$.

The outcome of the event $\mathcal{E}$ depends only on the relative ordering of the vertices in the set $Z \colonequals X \cup Y \cup \{v\}$. Note that $u \in Y$, so $u \in Z$. The size of this set is:
\[
|Z| = |X \cup Y \cup \{v\}| = |X \cup Y| + 1 = |X|+|Y|-|X \cap Y|+1 = d_u^- + d_v^- - n_{uv}^{--} + 1.
\]
For the event $\mathcal{E}$ to occur, two conditions must be met:
\begin{enumerate}
    \item The vertex $v$ must appear after all vertices in $Y$. Since $u \in Y$, $v$ must appear after $u$.
    \item The vertex $u$ must appear after all vertices in $X$.
\end{enumerate}
Combining these two conditions implies that $v$ must appear after all of $X\cup Y$, that is, $v$ must be the last element in the random permutation of all vertices in $Z$.

We formalize the calculation using conditional probability. Let $E_1$ be the event that $v$ is the last element in a permutation of $Z$. The probability of this event is:
\[
\Prb(E_1) = \frac{1}{|Z|} = \frac{1}{d_u^- + d_v^- - n_{uv}^{--} + 1}.
\]
Let us now consider the conditional probability $\Prb(\mathcal{E} \mid E_1)$. Conditioned on $v$ being last, the requirement $Y \prec v$ is automatically satisfied. The event $\mathcal{E}$ now only requires $X \prec u$. This remaining condition only depends on the relative ordering of the vertices in $X \cup \{u\}$. The probability that $u$ is the last among these $|X|+1$ vertices is:
\[
\Prb(\mathcal{E} \mid E_1) = \Prb(X \prec u) = \frac{1}{|X|+1} = \frac{1}{d_u^-+1}.
\]
By the law of conditional probabilities, the joint probability is the product:
\begin{align*}
\Prb(E_u^-\cap E_v^-) &= \Prb(\mathcal{E}) = \Prb(E_1) \cdot \Prb(\mathcal{E} \mid E_1) \\
&= \frac{1}{|Z|} \cdot \frac{1}{|X|+1} = \frac{1}{\bigl(d_u^- + d_v^- - n_{uv}^{--} + 1\bigr)\,\bigl(d_u^-+1\bigr)}.
\end{align*}
A symmetric argument holds for $\Prb(E_u^+\cap E_v^+)$, where the dependency $v \in N^+(u)$ forces $u$ to be the last element in the relevant permutation.
\end{proof}

Finally, we consider the higher-order terms. The impossible intersection from Claim~\ref{cl:impossible} implies that certain events have zero probability:
\begin{align*}
\Prb(E_u^-\cap E_u^+\cap E_v^-) =0, \quad \Prb(E_u^+\cap E_v^-\cap E_v^+) =0, \quad \Prb(E_u^-\cap E_u^+\cap E_v^-\cap E_v^+) =0.
\end{align*}
The remaining two triple terms are $\Prb(E_u^-\cap E_v^-\cap E_v^+)$ and $\Prb(E_u^-\cap E_u^+\cap E_v^+)$, which require a custom argument.

\begin{claim}[Special triple intersections]\label{cl:triple_uuvplus}
Assume $u\to v$. The two nonzero triple intersection probabilities are given below.
\begin{align*}
\Prb\bigl(E_u^-\cap E_u^+\cap E_v^+\bigr) &= \frac{1}{\bigl(d_u+d_v^+-n_{uv}^{\pm+}+1\bigr)\,(d_v^++1)} \\ 
\Prb\bigl(E_u^-\cap E_v^-\cap E_v^+\bigr) &= \frac{1}{\bigl(d_u^-+d_v-n_{uv}^{-\pm}+1\bigr)\,(d_u^-+1)}
\end{align*} 
\end{claim}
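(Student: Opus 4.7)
The plan is to mirror the conditional-probability argument from Claim~\ref{cl:keypairs}: identify a forced ordering induced by the arc $u\to v$, determine the minimal vertex set whose internal ordering governs the event, and split the calculation into two independent pieces by conditioning on the identity of the ``last'' element in that set.

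For the first probability $\Prb(E_u^-\cap E_u^+\cap E_v^+)$, the combined event $E_u^-\cap E_u^+$ demands that every vertex of $N(u)$ appears before $u$. Because $u\to v$ forces $v\in N^+(u)\subseteq N(u)$, this implies $v\prec u$, and then $E_v^+$ demands $N^+(v)\prec v$. Concatenating these requirements, every vertex of $N(u)\cup N^+(v)$ must precede $u$. Let $Z_1\colonequals N(u)\cup N^+(v)\cup\{u\}$. Using $v\notin N^+(v)$ and (by hypothesis) $u\notin N^+(v)$, one checks that $|Z_1| = d_u + d_v^+ - n_{uv}^{\pm+} + 1$. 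Thus the event decomposes as: (i) $u$ is last within the restriction of $\pi$ to $Z_1$, of probability $1/|Z_1|$; and (ii) conditional on (i), $v$ is last within $N^+(v)\cup\{v\}$, of probability $1/(d_v^++1)$. Multiplying yields the stated formula.

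For the second probability $\Prb(E_u^-\cap E_v^-\cap E_v^+)$, I would apply the symmetric argument with the roles of $u$ and $v$ swapped: now $E_v^-\cap E_v^+$ forces $N(v)\prec v$, and since $u\in N^-(v)\subseteq N(v)$ we obtain $u\prec v$, while $E_u^-$ adds the constraint $N^-(u)\prec u$. Taking $Z_2\colonequals N^-(u)\cup N(v)\cup\{v\}$, the hypothesis $v\nrightarrow u$ guarantees $v\notin N^-(u)$, so $|Z_2|=d_u^-+d_v-n_{uv}^{-\pm}+1$. Conditioning on $v$ being last in the restriction to $Z_2$ (probability $1/|Z_2|$), the residual requirement $N^-(u)\prec u$ has conditional probability $1/(d_u^-+1)$, giving the second identity.

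The main obstacle, as in Claim~\ref{cl:keypairs}, is correctly accounting for the overlaps when sizing $Z_1$ and $Z_2$: I must keep track of whether $u$ or $v$ already lies in the neighborhood sets being unioned, and use the precise definitions of $n_{uv}^{\pm+}$ and $n_{uv}^{-\pm}$ (which count overlaps in $N(u)\cap N^+(v)$ and $N^-(u)\cap N(v)$ respectively) so that the ``$+1$'' in each formula comes out exactly right. Once the cardinalities are verified, the conditional decomposition is clean because after fixing the last element of $Z_i$, one of the two universal-quantifier conditions is trivialized and the remaining one depends only on an independent local ordering.
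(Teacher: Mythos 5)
Your proposal is correct and follows essentially the same conditional-probability argument as the paper: identify that the arc $u\to v$ forces $u$ (resp.\ $v$) to be last in $Z_1$ (resp.\ $Z_2$), compute that probability as $1/|Z_i|$, then multiply by the residual ordering probability. The paper only writes out the first identity and defers the second as ``similar,'' whereas you spell out both; your bookkeeping of $|Z_1|$ and $|Z_2|$ (using $u\notin N^+(v)$ and $v\notin N^-(u)$, both consequences of $v\nrightarrow u$) matches the paper's implicit count $|X\cup Y|+1$.
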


\begin{proof}
We explain the argument for the first identity; the proof for the second identity is similar. Let $X=N(u)$ and $Y=N^+(v)$. We compute $\Prb(\mathcal{E})$ where $\mathcal{E}$ is the event $\{X \prec u \text{ and } Y \prec v\}$. Since $u\to v$, the membership $v \in X$ forces the ordering $v \prec u$. Combined, these conditions imply that $u$ must be the last element in any permutation of the relevant set $Z \colonequals X \cup Y \cup \{u\}$.

The probability of the event $\mathcal{E}$ is therefore the probability that $u$ is last in $Z$, multiplied by the conditional probability that $Y \prec v$ (given $u$ is last in $Z$):
\begin{align*}
\Prb(\mathcal{E}) &= \Prb(u \text{ is last in } Z) \cdot \Prb(Y \prec v \mid u \text{ is last in } Z) \\
&= \frac{1}{|X \cup Y|+1} \cdot \frac{1}{|Y|+1} \\
&= \frac{1}{(d_u + d_v^+ - n_{uv}^{\pm+} + 1)(d_v^+ + 1)}. \qedhere
\end{align*}
\end{proof}

\subsection{Case 3: Adjacent vertices with two arcs}\label{subsec:adj:two-arcs} We now analyze the case where a directed 2-cycle exists between $u$ and $v$, meaning both arcs $u \to v$ and $v \to u$ are present.\footnote{The presence of such 2-cycles is the key distinction between a simple digraph and an \emph{oriented graph}, which forbids them. As oriented graphs are a special case of the digraphs considered here, all our results apply to them as well.}

We proceed to the analysis of the terms in the inclusion-exclusion expansion~\eqref{eq:PIE-4}. The four singleton probabilities and the two specific probabilities $\Prb(E_u^-\cap E_u^+)$ and $\Prb(E_v^-\cap E_v^+)$ are unaffected, as their calculations are local to a single vertex.

However, we show that all the other terms in the expansion \eqref{eq:PIE-4} vanish. Since the directed edges $u\to v$ and $v\to u$ both exist, the four primary events imply an ordering relation between $u$ and $v$:
\begin{align*}
E_u^- \text{ requires } v \prec u, && E_v^- \text{ requires } u \prec v, \\
E_u^+ \text{ requires } v \prec u, && E_v^+ \text{ requires } u \prec v.
\end{align*}
Since $u\prec v$ and $v\prec u$ are mutually exclusive, any event that requires both to be true is impossible: 
\[
\Prb(E_u^-\cap E_v^-)=\Prb(E_u^-\cap E_v^+)=\Prb(E_u^+\cap E_v^-)=\Prb(E_u^+\cap E_v^+)=0.
\]
Furthermore, every triple and quadruple intersection is a sub-event of at least one of the impossible mixed pairs identified above. For example, the event $E_u^-\cap E_u^+\cap E_v^-$ requires the event $E_u^+\cap E_v^-$ to hold. Therefore, all triple and quadruple intersection probabilities are $0$.

We now consolidate our findings into a single theorem. This result provides a complete recipe for calculating the covariance by cataloging the 15 intersection probabilities required for the inclusion-exclusion formula in computing $\Psi(u,v)$.

\begin{thm}[Covariance formula]\label{thm:cov-catalog}
For any two distinct vertices $u,v \in V(D)$, the covariance of their indicator variables $\mathbb{I}_u$ and $\mathbb{I}_v$ (for the events $u \in S$ and $v \in S$) is given by:
\[
\Cov(\mathbb{I}_u,\mathbb{I}_v)=\Prb(u,v\in S)-(1-\rho_D(u))(1-\rho_D(v)),
\]
where $\Prb(u,v\in S)=1-\Psi(u,v)$. The term $\Psi(u,v)$ is computed using the inclusion-exclusion formula \eqref{eq:PIE-4}, with the required probabilities given in Table~\ref{tab:complete-catalog}.
\end{thm}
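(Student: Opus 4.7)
The plan is to assemble Theorem~\ref{thm:cov-catalog} as a bookkeeping synthesis of the case analyses carried out in \S\ref{subsec:nonadj}--\S\ref{subsec:adj:two-arcs}. The starting point is the general covariance identity for indicators,
\[
\Cov(\mathbb{I}_u,\mathbb{I}_v) \;=\; \Prb(u,v\in S) - \Prb(u\in S)\,\Prb(v\in S),
\]
combined with Proposition~\ref{prop:prob_identity}, which gives $\Prb(u\in S) = 1 - \rho_D(u)$. This instantly recovers the stated closed form once the joint probability $\Prb(u,v\in S)$ is known, so the core task reduces to evaluating this joint probability.

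Next, I would pass to the complementary event: $\Prb(u,v\in S) = 1 - \Psi(u,v)$, where $\Psi(u,v) = \Prb(\{u\notin S\}\cup\{v\notin S\})$. The advantage of this reformulation is that each of $\{u\notin S\}$ and $\{v\notin S\}$ further decomposes, by the argument in the proof of Proposition~\ref{prop:prob_identity}, as $E_u^-\cup E_u^+$ and $E_v^-\cup E_v^+$, respectively. Applying inclusion--exclusion to the resulting four-fold union gives formula \eqref{eq:PIE-4}, which expresses $\Psi(u,v)$ as an alternating sum of 15 intersection probabilities (four singletons, six pairs, four triples, one quadruple), each of which is a statement about the relative ordering of $u$, $v$, and specified subsets of their neighborhoods under the uniform random labeling.

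The bulk of the work, already completed in the preceding subsections, is to evaluate these 15 probabilities in each of three mutually exclusive regimes: (i) $u$ and $v$ are nonadjacent, in which case Lemma~\ref{lem:g_function} applies uniformly to every mixed intersection and yields closed-form values via the function $g$; (ii) exactly one of the arcs $u\to v$ or $v\to u$ is present, in which case Claim~\ref{cl:impossible} forces $\Prb(E_u^+\cap E_v^-)=0$ (or its symmetric counterpart), three higher intersections inherit this vanishing, and Claims~\ref{cl:keypairs} and \ref{cl:triple_uuvplus} supply tailored formulas for the surviving pairs and triples; and (iii) both arcs $u\to v$ and $v\to u$ are present, in which case all four mixed pairs are impossible and hence every triple and the quadruple also vanish. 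Since these three regimes partition all ordered pairs of distinct vertices, each entry of Table~\ref{tab:complete-catalog} is filled in by the appropriate case.

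The actual proof is therefore essentially a catalog lookup: for a given pair $(u,v)$, one identifies its adjacency type, reads the 15 entries from the corresponding column of Table~\ref{tab:complete-catalog}, substitutes them into \eqref{eq:PIE-4} to obtain $\Psi(u,v)$, and finally uses $\Cov(\mathbb{I}_u,\mathbb{I}_v) = (1-\Psi(u,v)) - (1-\rho_D(u))(1-\rho_D(v))$. The main conceptual obstacle, which has already been navigated, is recognizing precisely which joint ordering events are rendered impossible by the existing arcs (so that Lemma~\ref{lem:g_function} no longer applies symmetrically) and replacing those cases with the correct conditional argument that first identifies the forced last element of the relevant permutation and then multiplies by the residual ordering probability among the remaining neighborhood; once this catalog is in hand, the theorem follows without further calculation.
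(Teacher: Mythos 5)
Your proposal matches the paper's reasoning exactly: the theorem has no separate proof environment because it is a synthesis of \S\ref{subsec:pie_setup}--\S\ref{subsec:adj:two-arcs}, and your write-up correctly identifies the three ingredients (the indicator covariance identity plus Proposition~\ref{prop:prob_identity}, the complementary-event/PIE decomposition \eqref{eq:PIE-4}, and the regime-wise catalog of the 15 intersection probabilities). The only tiny imprecision is in case (ii): of the surviving mixed pairs, $\Prb(E_u^-\cap E_v^+)$ is still computed via Lemma~\ref{lem:g_function}, and only $\Prb(E_u^-\cap E_v^-)$ and $\Prb(E_u^+\cap E_v^+)$ need the tailored formulas of Claim~\ref{cl:keypairs}; this does not affect the correctness of your argument.
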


\begin{table}[htbp]
\centering
\caption{Complete catalog of terms in \eqref{eq:PIE-4} for all cases of vertex pairs.}
\label{tab:complete-catalog}
\small
\begin{tabular}{@{}llll@{}}
\toprule
\textbf{Term} & \textbf{Case 1: Nonadjacent} & \textbf{Case 2: $u \to v$} & \textbf{Case 3: $u \curvyarrows v$} \\ \midrule
\multicolumn{4}{l}{\textit{\textbf{$\Sigma_1$ Terms}}} \\
$\Prb(E_u^-)$ & $\frac{1}{d_u^-+1}$ & $\frac{1}{d_u^-+1}$ & $\frac{1}{d_u^-+1}$ \\[.7em]
$\Prb(E_u^+)$ & $\frac{1}{d_u^++1}$ & $\frac{1}{d_u^++1}$ & $\frac{1}{d_u^++1}$ \\[.7em]
$\Prb(E_v^-)$ & $\frac{1}{d_v^-+1}$ & $\frac{1}{d_v^-+1}$ & $\frac{1}{d_v^-+1}$ \\[.7em]
$\Prb(E_v^+)$ & $\frac{1}{d_v^++1}$ & $\frac{1}{d_v^++1}$ & $\frac{1}{d_v^++1}$ \\ \midrule
\multicolumn{4}{l}{\textit{\textbf{$\Sigma_2$ Terms}}} \\
$\Prb(E_u^-\cap E_u^+)$ & $\frac{1}{d_u+1}$ & $\frac{1}{d_u+1}$ & $\frac{1}{d_u+1}$ \\[.7em]
$\Prb(E_v^-\cap E_v^+)$ & $\frac{1}{d_v+1}$ & $\frac{1}{d_v+1}$ & $\frac{1}{d_v+1}$ \\[.7em]
$\Prb(E_u^-\cap E_v^-)$ & $\frac{1}{d_u^-+d_v^--n_{uv}^{--}+2}\left(\frac{1}{d_u^-+1}+\frac{1}{d_v^-+1}\right)$ & $\frac{1}{(d_u^-+d_v^--n_{uv}^{--}+1)(d_u^-+1)}$ & $0$ \\[.7em]
$\Prb(E_u^-\cap E_v^+)$ & $\frac{1}{d_u^-+d_v^+-n_{uv}^{-+}+2}\left(\frac{1}{d_u^-+1}+\frac{1}{d_v^++1}\right)$ & $\frac{1}{d_u^-+d_v^+-n_{uv}^{-+}+2}\left(\frac{1}{d_u^-+1}+\frac{1}{d_v^++1}\right)$ & $0$ \\[.7em]
$\Prb(E_u^+\cap E_v^-)$ & $\frac{1}{d_u^++d_v^--n_{uv}^{+-}+2}\left(\frac{1}{d_u^++1}+\frac{1}{d_v^-+1}\right)$ & $0$ & $0$ \\[.7em]
$\Prb(E_u^+\cap E_v^+)$ & $\frac{1}{d_u^++d_v^+-n_{uv}^{++}+2}\left(\frac{1}{d_u^++1}+\frac{1}{d_v^++1}\right)$ & $\frac{1}{(d_u^++d_v^+-n_{uv}^{++}+1)(d_v^++1)}$ & $0$ \\ \midrule
\multicolumn{4}{l}{\textit{\textbf{$\Sigma_3$ Terms}}} \\
$\Prb(E_u^-\cap E_u^+\cap E_v^-)$ & $\frac{1}{d_u+d_v^--n_{uv}^{\pm-}+2}\left(\frac{1}{d_u+1}+\frac{1}{d_v^-+1}\right)$ & $0$ & $0$ \\[.7em]
$\Prb(E_u^-\cap E_u^+\cap E_v^+)$ & $\frac{1}{d_u+d_v^+-n_{uv}^{\pm+}+2}\left(\frac{1}{d_u+1}+\frac{1}{d_v^++1}\right)$ & $\frac{1}{(d_u+d_v^+-n_{uv}^{\pm+}+1)(d_v^++1)}$ & $0$ \\[.7em]
$\Prb(E_u^-\cap E_v^-\cap E_v^+)$ & $\frac{1}{d_u^-+d_v-n_{uv}^{-\pm}+2}\left(\frac{1}{d_u^-+1}+\frac{1}{d_v+1}\right)$ & $\frac{1}{(d_u^-+d_v-n_{uv}^{-\pm}+1)(d_u^-+1)}$ & $0$ \\[.7em]
$\Prb(E_u^+\cap E_v^-\cap E_v^+)$ & $\frac{1}{d_u^++d_v-n_{uv}^{+\pm}+2}\left(\frac{1}{d_u^++1}+\frac{1}{d_v+1}\right)$ & $0$ & $0$ \\ \midrule
\multicolumn{4}{l}{\textit{\textbf{$\Sigma_4$ Term}}} \\
$\Prb(E_u^-\cap E_u^+\cap E_v^-\cap E_v^+)$ & $\frac{1}{d_u+d_v-n_{uv}^{\pm\pm}+2}\left(\frac{1}{d_u+1}+\frac{1}{d_v+1}\right)$ & $0$ & $0$ \\ \bottomrule
\end{tabular}
\end{table}

\section{The variance-based lower bound}\label{sec:bd}

We have now assembled all the necessary components for our second main result: a variance-based refinement of the AGJS bound \eqref{eq:AGJS}. The previous sections provided a randomized algorithm for generating a feedback vertex set, $S$, and established a formula for calculating the variance of its size, $\Var(|S|)$. In this section, we connect these pieces using the Bhatia--Davis inequality \cite{bhatia2000} to produce a refined lower bound on $\vv{\alpha}(D)$.

\begin{thm}\label{thm:main_acl}
Let $D$ be a digraph with $n$ vertices and $c$ connected components. Let $S$ be the random feedback vertex set produced by the DLL algorithm. Then 
\begin{equation}\label{eq:ACL-final-simple}
\vv{\alpha}(D)
\;\ge\; \sum_{v\in V(D)} \rho_D(v) \;+\; \frac{\Var(|S|)}{\left(\sum_{v\in V(D)} \rho_D(v)\right)-c}.
\end{equation}
If the denominator is zero, the second term is taken to be zero.
\end{thm}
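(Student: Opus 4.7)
The plan is to apply the Bhatia--Davis inequality to the random variable $|S|$, where $S$ is the feedback vertex set produced by the DL algorithm. Recall that Bhatia--Davis states: if a random variable $X$ is almost surely confined to the interval $[m,M]$ and has mean $\mu$, then $\Var(X) \le (M-\mu)(\mu-m)$. My aim is to choose $m$, $M$, and $\mu$ so that the resulting inequality rearranges into the bound \eqref{eq:ACL-final-simple}.

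The three ingredients are already in hand. First, by Lemma~\ref{lem:DLisFVS}, $S$ is always a feedback vertex set, so $|S| \ge \vv{\beta}(D)$ with probability one; this gives the lower endpoint $m = \vv{\beta}(D)$. Second, Proposition~\ref{prop:DL_bound} gives the deterministic upper bound $|S| \le n-c$, so $M = n-c$. Third, Corollary~\ref{cor:DL_expectation} yields $\mu \colonequals \E[|S|] = n - \sum_{v} \rho_D(v)$. Substituting into Bhatia--Davis gives
\[
\Var(|S|) \;\le\; \bigl(n-c-\E[|S|]\bigr)\bigl(\E[|S|]-\vv{\beta}(D)\bigr) \;=\; \Bigl(\sum_v \rho_D(v) - c\Bigr)\bigl(\E[|S|]-\vv{\beta}(D)\bigr).
\]

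Assuming the denominator $\sum_v \rho_D(v) - c$ is strictly positive, I divide and solve for $\vv{\beta}(D)$:
\[
\vv{\beta}(D) \;\le\; \E[|S|] \;-\; \frac{\Var(|S|)}{\sum_v \rho_D(v) - c}.
\]
Finally, I invoke the complementary identity \eqref{eq:complementary-identity}, $\vv{\alpha}(D) = n - \vv{\beta}(D)$, and substitute the value of $\E[|S|]$ from Corollary~\ref{cor:DL_expectation} to obtain exactly \eqref{eq:ACL-final-simple}.

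For the degenerate case where $\sum_v \rho_D(v) - c = 0$, the setup forces $\E[|S|] = n-c$, which combined with $|S| \le n-c$ a.s.\ implies $|S| = n-c$ almost surely and hence $\Var(|S|) = 0$. The claimed bound then reduces to $\vv{\alpha}(D) \ge \sum_v \rho_D(v)$, which is exactly the AGJS bound \eqref{eq:AGJS} and so holds trivially. There is no real obstacle here: the entire argument is a clean assembly of Proposition~\ref{prop:DL_bound}, Corollary~\ref{cor:DL_expectation}, Bhatia--Davis, and the $\vv{\alpha}$--$\vv{\beta}$ identity. The only subtlety worth flagging explicitly is that $\vv{\beta}(D)$ is a valid choice of lower endpoint for $|S|$ precisely because DL always outputs a feedback vertex set; this is what makes the Bhatia--Davis application yield a meaningful bound on $\vv{\beta}(D)$ rather than a vacuous one.
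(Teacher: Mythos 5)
Your proposal is correct and follows essentially the same route as the paper's proof: the Bhatia--Davis inequality applied to $|S|$ with endpoints $m=\vv{\beta}(D)$ and $M=n-c$ (from Lemma~\ref{lem:DLisFVS} and Proposition~\ref{prop:DL_bound}), the mean from Corollary~\ref{cor:DL_expectation}, rearrangement, and the complementary identity \eqref{eq:complementary-identity}. Your handling of the degenerate denominator is slightly more explicit than the paper's (you actually deduce $\Var(|S|)=0$ so the convention is harmless), but this is a refinement of the same argument, not a different one.
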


\begin{proof}
We apply the Bhatia--Davis inequality, which bounds the variance of a random variable by its expectation and range. Let $|S|$ be the size of the random FVS generated by the DLL algorithm. From our previous analysis, we have three key ingredients:
\begin{itemize}
    \item The minimum FVS size provides a lower bound, so $|S|\ge \vv{\beta}(D)$.
    \item Proposition~\ref{prop:DL_bound} gives a deterministic upper bound, $|S| \leq n-c$.
    \item Proposition~\ref{prop:prob_identity} and Corollary~\ref{cor:DL_expectation} give the expected size, $\mu \colonequals \E[|S|]=n-\sum_v\rho_D(v)$.
\end{itemize}
For a random variable $X$ with range $[m,M]$, the Bhatia--Davis inequality \cite{bhatia2000} states that $\Var(X) \le (M-\E[X])(\E[X]-m)$. Applying this to $X=|S|$ with $m=\vv{\beta}(D)$ and $M=n-c$, we get:
\[
\Var(|S|) \le (M-\mu)(\mu-\vv{\beta}(D)).
\]
If the term $M-\mu$ is positive, we can rearrange this inequality to get an improved upper bound on the minimum FVS size, $\vv{\beta}(D)$:
\[
\vv{\beta}(D)\;\le\; \mu - \frac{\Var(|S|)}{M-\mu}.
\]
We now use the identity $\vv{\alpha}(D)=n-\vv{\beta}(D)$ to translate this into a lower bound on $\vv{\alpha}(D)$. Substituting the expressions for $\mu$ and $M$, we obtain the desired inequality:
\begin{align*}
\vv{\alpha}(D) &= n-\vv{\beta}(D) \ge n - \left(\mu - \frac{\Var(|S|)}{M-\mu}\right) = (n-\mu) + \frac{\Var(|S|)}{M-\mu} \\
&= \left(n - \left(n - \sum_{v\in V(D)} \rho_D(v)\right)\right) + \frac{\Var(|S|)}{\left(n-c\right) - \left(n-\sum_v \rho_D(v)\right)} \\
&= \sum_{v\in V(D)} \rho_D(v) + \frac{\Var(|S|)}{\sum_v \rho_D(v) - c}.
\end{align*}
If the term $M-\mu=0$, then the denominator $\sum_v \rho_D(v) - c$ vanishes; in this case, the correction term is taken to be zero. This completes the proof.
\end{proof}

Next, we address the case when the denominator $\sum_{v} \rho_{D}(v)-c$ is zero.

\begin{lem}\label{lem:exceptions}
Let $D$ be a digraph with $c$ connected components. Then $\sum_{v} \rho_{D}(v)-c=0$ if and only if every connected component of $D$ is a complete symmetric digraph (namely, a graph which has an arc between every ordered pair of vertices).
\end{lem}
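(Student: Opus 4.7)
The plan is to convert the algebraic identity $\sum_v \rho_D(v) = c$ into a pointwise statement about the DL algorithm and then read off the graph structure. By Corollary~\ref{cor:DL_expectation}, $\E[|S|] = n - \sum_v \rho_D(v)$, so the hypothesis is equivalent to $\E[|S|] = n - c$. Proposition~\ref{prop:DL_bound} gives the deterministic upper bound $|S| \le n-c$ for every labeling $L$, and because the uniform distribution on labelings assigns positive mass to each $L$, the equality $\E[|S|] = n-c$ is equivalent to $|S|(L) = n-c$ for \emph{every} labeling $L$. Furthermore, the proof of Proposition~\ref{prop:DL_bound} already identifies at least one excluded vertex per component (the max-label one), so $|S|(L) = n-c$ forces each component to contribute \emph{exactly} one excluded vertex under every labeling, namely its max-label vertex.

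For the easy direction, suppose each connected component of $D$ is a complete symmetric digraph, with component sizes $k_1, \dots, k_c$. Every vertex $v$ lying in a component of size $k$ satisfies $d_v^+ = d_v^- = d_v = k-1$, which gives
\[
\rho_D(v) = \frac{1}{k} + \frac{1}{k} - \frac{1}{k} = \frac{1}{k}.
\]
Summing within each component contributes $k \cdot (1/k) = 1$, so $\sum_v \rho_D(v) = c$. (Alternatively, one can observe directly that under any labeling the max-label vertex of each component is excluded and every other vertex has the max as both a right in- and right out-neighbor, so $|S| = n-c$ pointwise.)

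For the reverse direction, assume $\sum_v \rho_D(v) = c$, so by the reduction above, for every labeling $L$ exactly the max-label vertex of each component is excluded. Fix a component $W$ with $|W| \ge 2$ (size-one components are vacuously complete symmetric) and pick any two distinct vertices $u, v \in W$. Construct a labeling $L$ with $L(u) = n$ and $L(v) = n-1$, filling in the remaining labels arbitrarily. Since $u \in W$ and $L(u) > L(v)$, the vertex $v$ is not the max-label vertex of $W$, so $v \in S$ under this labeling. By the DL inclusion rule, $v$ must have both a right in-neighbor and a right out-neighbor; but the only vertex with label exceeding $L(v)$ is $u$, so both arcs $u \to v$ and $v \to u$ must be present in $D$. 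As $u$ and $v$ were arbitrary in $W$, the component $W$ is a complete symmetric digraph. The main subtlety is the opening reduction: translating an expectation identity into the claim that \emph{every} labeling saturates the bound, and then using Proposition~\ref{prop:DL_bound}'s component-wise lower bound on exclusions to pin down which vertex is excluded in each component. Once those observations are in place, the two-vertex labeling construction closes the argument in one stroke.
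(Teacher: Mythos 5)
Your proof is correct and follows essentially the same approach as the paper: reduce $\sum_v \rho_D(v) = c$ to $\E[|S|] = n-c$, use the deterministic bound $|S|\le n-c$ to force $|S|\equiv n-c$ pointwise, invoke the one-excluded-vertex-per-component observation from Proposition~\ref{prop:DL_bound}, and then probe an arbitrary pair $u,v$ in a component by placing them last in the labeling. Your phrasing of the final step is a clean contrapositive of the paper's (you deduce both arcs must exist so that $v\in S$, whereas the paper assumes an arc is missing and derives two exclusions in one component), and you add an explicit direct computation for the easy direction that the paper leaves implicit; neither constitutes a genuinely different route.
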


\begin{proof}
Let $S$ denote the random FVS produced as an output of the DLL algorithm.  We have $\E[|S|]= n - \sum_{v \in V(D)} \rho_D(v)$ from Corollary~\ref{cor:DL_expectation}. Therefore, the equality $\sum_{v} \rho_{D}(v)-c=0$ holds if and only if $\E[|S|] = n-c$. By Proposition~\ref{prop:DL_bound}, the maximum possible value of $|S|$ is also $n-c$. Since the average value satisfies $\E[|S|] = n-c$, the random variable $|S|$ must be constant, namely $n-c$. It follows that in \emph{every} ordering of the vertices, the DLL algorithm produces a FVS of size exactly $n-c$. 

From the proof of Proposition~\ref{prop:DL_bound}, each connected component of $D$ has at least one vertex not in $S$. We have exactly $c$ elements not in $S$, which forces exactly one element from each connected component not to be in $S$ for every ordering of the vertices. This latter condition forces each connected component to be a complete symmetric digraph. Indeed, if a connected component were not a complete symmetric digraph, there would exist two vertices $u$ and $v$ such that (i) $u$ and $v$ are nonadjacent, (ii) $u\to v$ but $v\nrightarrow u$, or (iii) $v\to u$ but $u\nrightarrow v$. In all of these cases, the vertex order in which $u$ and $v$ appear as the last two vertices will produce a feedback vertex set not containing $u$ and $v$. This is because, in such an ordering, neither $u$ nor $v$ has both a right in-neighbor \emph{and} a right out-neighbor; so, they do not belong to the feedback vertex set produced by the DLL algorithm.
\end{proof}

\begin{rem}[Interpreting the bound]\label{rem:denominator}
The denominator of the refinement term in equation~\eqref{eq:ACL-final-simple} is always nonnegative. Indeed, $\sum_{v\in V(D)} \rho_D(v) - c$ is the gap between the expected number of vertices not in $S$ and the minimum possible number of vertices not in $S$. Our bound therefore provides an improvement over the AGJS bound \eqref{eq:AGJS} precisely when both the variance $\Var(|S|)$ and this denominator are strictly positive. The denominator is zero if and only if $D$ is a complete symmetric digraph. In this case, the maximum output size from the DLL algorithm is equal to its expected size. This rare scenario is characterized by Lemma~\ref{lem:exceptions}. In these exceptional cases, we set the correction term to zero and our bound coincides with the AGJS bound~\eqref{eq:AGJS}.
\end{rem}

In the important case of connected graphs, the bound simplifies as follows:

\begin{cor}\label{cor:connected_bound}
If a digraph $D$ is connected, then
\[
\vv{\alpha}(D) \;\ge\; \sum_{v\in V(D)}\left(\frac{1}{1+d_v^+}+\frac{1}{1+d_v^-}-\frac{1}{1+d_v}\right)+\frac{\Var(|S|)}{\left(\sum_{v \in V(D)}\left(\frac{1}{1+d_v^+}+\frac{1}{1+d_v^-}-\frac{1}{1+d_v}\right)\right)-1}.
\]
\end{cor}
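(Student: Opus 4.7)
The plan is to observe that Corollary~\ref{cor:connected_bound} is a direct specialization of Theorem~\ref{thm:main_acl} to the case of a connected digraph, combined with unrolling the explicit formula for $\rho_D(v)$ from \eqref{eq:rho_D(v)}. So the proof is essentially a two-step substitution rather than a new argument.

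First, I would set $c = 1$ in the statement of Theorem~\ref{thm:main_acl}, which applies precisely because ``connected'' in our setting means a single weakly connected component. This immediately yields
\[
\vv{\alpha}(D) \;\ge\; \sum_{v\in V(D)} \rho_D(v) \;+\; \frac{\Var(|S|)}{\left(\sum_{v\in V(D)} \rho_D(v)\right)-1}.
\]
Next, I would substitute $\rho_D(v)=\frac{1}{1+d_v^+}+\frac{1}{1+d_v^-}-\frac{1}{1+d_v}$ from equation~\eqref{eq:rho_D(v)} into both occurrences of $\sum_{v} \rho_D(v)$, giving exactly the displayed inequality in the corollary.

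The only subtlety to address is the possibility that the denominator $\sum_v \rho_D(v) - 1$ vanishes; by Lemma~\ref{lem:exceptions}, this happens exactly when $D$ is a complete symmetric digraph on $n$ vertices (the sole connected component). As already noted in Theorem~\ref{thm:main_acl}, in that degenerate case the correction term is interpreted as zero, and the inequality reduces to the AGJS bound \eqref{eq:AGJS}, which still holds. So there is no genuine obstacle here: the only thing worth mentioning in the written proof is this convention on the degenerate denominator. The main ``work'' of Corollary~\ref{cor:connected_bound} has already been done in Theorem~\ref{thm:main_acl}, and my proof would consist of one or two sentences stating the specialization $c=1$ and the substitution of $\rho_D(v)$, with a brief parenthetical pointer to Lemma~\ref{lem:exceptions} for the complete-symmetric edge case.
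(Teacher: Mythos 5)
Your proposal is correct and matches the paper's own proof, which likewise just sets $c=1$ in Theorem~\ref{thm:main_acl} and substitutes the explicit formula \eqref{eq:rho_D(v)} for $\rho_D(v)$. Your extra remark on the degenerate denominator (handled by Lemma~\ref{lem:exceptions} and the convention in Theorem~\ref{thm:main_acl}) is consistent with the paper and a reasonable addition.
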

\begin{proof}
This follows directly by substituting the value $c=1$ for a connected graph into the denominator term in \eqref{eq:ACL-final-simple} and substituting the explicit formula for $\rho_D(v)$ from \eqref{eq:rho_D(v)}.
\end{proof}

\section{Numerical Comparisons on Random Graph Models}\label{sec:experiments}

We established two distinct refinements of the AGJS bound \eqref{eq:AGJS} for the maximum size of an induced acyclic subgraph in a digraph: a neighborhood-based method (Theorem~\ref{thm:main_selkow}) and a variance-based approach (Theorem~\ref{thm:main_acl}). Because their formulas are complex, an analytic comparison is unwieldy; so, we analyze each bound separately and then benchmark them on random models.

\subsection{Neighborhood-based bound}\label{subsect:num-neighbor}
From \eqref{eq:selkow-digraph}, the improvement for the neighborhood-based bound is:
\begin{align*}
\Delta_{\text{neigh}} &\colonequals \sum_{v\in V(D)} \rho_D(v)\cdot \max\left\{0,\;1-\rho_D(v)-\sum_{u\in N(v)}\rho_D(u)\right\}
\end{align*}
where $\rho_D(v)$ is given by the formula \eqref{eq:rho_D(v)}:
\[
\rho_D(v) = \frac{1}{1+d_v^{+}} + \frac{1}{1+d_v^{-}} - \frac{1}{1+d_v}.
\]
A vertex $v$ contributes to the improvement $\Delta_{\text{neigh}}$  only if $1-\rho_D(v)-\sum_{u\in N(v)}\rho_D(u)>0$. In that case, its contribution is $C_v=\rho_D(v)(1-\rho_D(v)-\sum_{u\in N(v)}\rho_D(u)).$ This is maximized (for fixed $v$) when $\sum_{u\in N(v)}\rho_D(u)$ is small, in which case $C_v\approx \rho_D(v)\bigl(1-\rho_D(v)\bigr)$, whose maximum occurs at $\rho_D(v)=\frac{1}{2}$. Vertices with $\rho_D(v)\approx\frac{1}{2}$ typically have relatively low in- and out-degrees. We conclude that the vertices that contribute to an improved neighborhood-based bound are those that (a) have relatively low in- and out-degrees, and (b) have neighbors with relatively high in- and out-degrees.

\subsection{Variance-based bound}\label{subsect:num-var}
From \eqref{eq:ACL-final-simple}, the improvement for the variance-based bound is:

\begin{align*}
\Delta_{\text{var}} &\colonequals \frac{\Var(|S|)}{\sum_{v\in V(D)} \rho_D(v) - c},
\end{align*}
where $S$ is the DLL output (Section~\ref{subsec:dl_definition}) and $c$ is the number of connected components.

From this formula, we conclude that a meaningful improvement of the variance-based bound over the AGJS bound~\eqref{eq:AGJS} is dependent on a sufficiently large $\Var(|S|)$.  Recall that the DLL algorithm produces a feedback vertex set for each of the $n!$ distinct vertex orderings using the rule: 
\begin{quote}
Let $\pi$ be a random vertex ordering from digraph $D$, and let $v_i$ be the vertex in the $i$-th position within $\pi$.  Include $v_i$ in $S$ if and only if $v_i$ has at least one right in-neighbor and at least one right out-neighbor.  
\end{quote}
Recall $\Var(|S|)$ measures the spread of $|S|$ over all $n!$ vertex orderings. To identify the type of digraph where $\Var(|S|)$ would be meaningful, we need to consider what graph properties would result in significant differences in FVS sizes as we vary the vertex orderings. 

Consider a graph with two classes: high-degree and low-degree vertices. High-degree vertices are likely to enter $S$ regardless of position, while low-degree vertices are sensitive to position. Orderings that place many low-degree vertices early tend to yield larger $|S|$, and the reverse yields smaller $|S|$, inflating $\Var(|S|)$.

The heuristic above suggests that heterogeneous degree structure should favor both refinements over \eqref{eq:AGJS}. We now test this on random models. We also compare the two refinements to each other.

\subsection{An Erd\H{o}s--R\'enyi random digraph model} In this model, each ordered pair $(u,v)$ is included as an arc independently with probability $p$ (both directions allowed). Degrees are nearly uniform in expectation, so we do not expect substantial gains from either refinement. Computer simulations support this prediction. As shown in Table~\ref{tab:erdos-renyi}, the two new lower bounds had no meaningful improvement over the AGJS bound~\eqref{eq:AGJS} in the random Erd\H{o}s--R\'enyi simple digraphs.

\begin{table}[ht]
\centering
\begin{tabular}{|>{\centering\arraybackslash}p{1.5cm}|
                >{\centering\arraybackslash}p{1.5cm}|
                >{\centering\arraybackslash}p{1.5cm}|
                >{\centering\arraybackslash}p{1.5cm}|>{\centering\arraybackslash}p{1.5cm}|>{\centering\arraybackslash}p{1.5cm}|>{\centering\arraybackslash}p{1.5cm}|>{\centering\arraybackslash}p{1.5cm}|}
\hline
\multicolumn{2}{|c|}{$p=0.05$} & \multicolumn{2}{|c|}{$p=0.50$} & \multicolumn{2}{|c|}{$p=0.95$} \\ \hline
$\Delta_{\text{neigh}}$ &  $\Delta_{\text{var}}$ & $\Delta_{\text{neigh}}$ &  $\Delta_{\text{var}}$ & $\Delta_{\text{neigh}}$ &  $\Delta_{\text{var}}$ \\  \hline
 0.0 & 0.31 & 0.0 & 0.50 & 0.0 & 0.91 \\ \hline
\end{tabular}
\caption{Erd\H{o}s--R\'enyi $G(n,p)$ digraphs with $n=100$; for each $p$ we generate $1000$ graphs, and the entries report the average improvements $\Delta_{\text{neigh}}$ and $\Delta_{\text{var}}$.}\label{tab:erdos-renyi}
\end{table}

\subsection{A two-type random digraph model}
We next use a graph model with two types of vertices: those with relatively high degrees and those with relatively low degrees. 

\begin{defn}[Two-type random digraph]
A random digraph $D$ is generated from the following parameters:
\begin{itemize}[leftmargin=2em]
  \item the number of vertices $n$;
  \item the proportion of low-degree vertices $p_{\text{low}}\in[0,1]$;
  \item arc probabilities $q_1$ (high--high), $q_2$ (high--low), $q_3$ (low--low), where $q_i\in [0, 1]$.
\end{itemize}
The generation process is as follows:
\begin{enumerate}
    \item The vertex set $V$ is partitioned into two disjoint subsets: $V_{\text{low}}$ of size $\lfloor p_{\text{low}} \cdot n \rfloor$ and $V_{\text{high}}$ of size $n - |V_{\text{low}}|$.
    
    \item For each ordered pair of distinct vertices $(u, v)$, there is the potential of zero, one, or two (directionally different) arcs between them. The probability that $u \rightarrow v$ exists is given by:
    \[
    p_{\text{edge}} = 
    \begin{cases} 
        q_1 & \text{if } u, v \in V_{\text{high}} \\
        q_2 & \text{if one vertex is in } V_{\text{high}} \text{ and the other is in } V_{\text{low}} \\
        q_3 & \text{if } u, v \in V_{\text{low}}
    \end{cases}
    \]
\end{enumerate}
\end{defn}

Based on the computer experiments, we observed that the relative strength of the two bounds is sensitive to the value of $q_3$. For instance, consider a scenario with a large proportion of low-degree type vertices ($p_{\text{low}} = 0.90$) and high connectivity involving the high-degree vertices ($q_1=0.70, q_2=0.50$). We observed that:
\begin{itemize}
    \item When the low-degree vertices are very sparsely connected (e.g., $q_3 = 0.01$), the neighborhood-based refinement appears to provide a stronger correction ($\Delta_{\text{neigh}} > \Delta_{\text{var}}$).
    \item However, by only slightly increasing the connectivity between low-degree vertices (e.g., to $q_3 = 0.05$), the variance-based bound becomes superior ($\Delta_{\text{var}} > \Delta_{\text{neigh}}$).
\end{itemize}
We could vary other variables (say $q_1$) while fixing the rest. The variation of $q_3$ was the most sensitive, which is why we focus only on this scenario.

Tables \ref{tab:two-type-1}, \ref{tab:two-type-2}, and \ref{tab:two-type-3} report the average value of improvements in a random simulation of the two-type digraph model run with the indicated parameter values.

\medskip \medskip 

\begin{table}[ht]
\centering
\begin{tabular}{|>{\centering\arraybackslash}p{1.5cm}|
                >{\centering\arraybackslash}p{1.5cm}|
                >{\centering\arraybackslash}p{1.5cm}|
                >{\centering\arraybackslash}p{1.5cm}|>{\centering\arraybackslash}p{1.5cm}|>{\centering\arraybackslash}p{1.5cm}|>{\centering\arraybackslash}p{1.5cm}|>{\centering\arraybackslash}p{1.5cm}|}
\hline
$q_3 \rightarrow$ & \multicolumn{3}{|c|}{$0.005$} & \multicolumn{3}{c|}{$0.01$} \\ \hline
$n \downarrow$ & \hyperref[eq:AGJS]{AGJS} & $\Delta_{\text{neigh}}$ &  $\Delta_{\text{var}}$ & \hyperref[eq:AGJS]{AGJS} & $\Delta_{\text{neigh}}$ &  $\Delta_{\text{var}}$ \\ \hline
100 & 20.89 & 8.78 & 3.84 & 19.93 & 5.82 & 3.53 \\ \hline
150 & 21.72 & 10.60 & 4.58  & 20.40 & 6.88 & 4.20 \\ \hline
200 & 21.94 & 11.39 & 5.02 & 20.67 & 7.32 & 4.52 \\ \hline
\end{tabular}
\caption{Two-type random digraphs with $p_{\text{low}}=0.90$, $q_1=0.70$, $q_2=0.50$. Columns list $q_3\in\{0.005,0.01\}$. Averages based on $100$ graphs for each $(n,q_3)$.}\label{tab:two-type-1}
\end{table}

\begin{table}[ht]
\centering
\begin{tabular}{|>{\centering\arraybackslash}p{1.5cm}|
                >{\centering\arraybackslash}p{1.5cm}|
                >{\centering\arraybackslash}p{1.5cm}|
                >{\centering\arraybackslash}p{1.5cm}|>{\centering\arraybackslash}p{1.5cm}|>{\centering\arraybackslash}p{1.5cm}|>{\centering\arraybackslash}p{1.5cm}|>{\centering\arraybackslash}p{1.5cm}|}
\hline
$q_3 \rightarrow$ & \multicolumn{3}{|c|}{$0.02$} & \multicolumn{3}{|c|}{$0.025$} \\ \hline
$n \downarrow$ & \hyperref[eq:AGJS]{AGJS} & $\Delta_{\text{neigh}}$ &  $\Delta_{\text{var}}$ & \hyperref[eq:AGJS]{AGJS} & $\Delta_{\text{neigh}}$ &  $\Delta_{\text{var}}$ \\ \hline
100 & 17.92 & 2.48 & 3.07 & 16.95 & 1.53 & 2.84 \\ \hline
150 & 18.12 & 2.52 & 3.50  & 17.14 & 1.49 & 3.20 \\ \hline
200 & 18.31 & 2.53 & 3.74 & 17.32 & 1.37 & 3.39 \\ \hline
\end{tabular}
\caption{Two-type random digraphs with $p_{\text{low}}=0.90$, $q_1=0.70$, $q_2=0.50$. Columns list $q_3\in\{0.02,0.025\}$. Averages based on $100$ graphs for each $(n,q_3)$.}\label{tab:two-type-2}
\end{table}

\begin{table}[ht]
\centering
\begin{tabular}{|>{\centering\arraybackslash}p{1.5cm}|
                >{\centering\arraybackslash}p{1.5cm}|
                >{\centering\arraybackslash}p{1.5cm}|
                >{\centering\arraybackslash}p{1.5cm}|>{\centering\arraybackslash}p{1.5cm}|>{\centering\arraybackslash}p{1.5cm}|>{\centering\arraybackslash}p{1.5cm}|>{\centering\arraybackslash}p{1.5cm}|}
\hline
$q_3 \rightarrow$ & \multicolumn{3}{|c|}{$0.05$} & \multicolumn{3}{|c|}{$0.10$} \\ \hline
$n \downarrow$ & \hyperref[eq:AGJS]{AGJS} & $\Delta_{\text{neigh}}$ &  $\Delta_{\text{var}}$ & \hyperref[eq:AGJS]{AGJS} & $\Delta_{\text{neigh}}$ &  $\Delta_{\text{var}}$ \\ \hline
100 & 13.47 & 0.15 & 2.01 & 9.39 & 0.00 & 1.15 \\ \hline
150 & 13.53 & 0.08 & 2.15  & 9.41 & 0.00 & 1.20 \\ \hline
200 & 13.61 & 0.04 & 2.23 & 9.49 & 0.00 & 1.22\\ \hline
\end{tabular}
\caption{Two-type random digraphs with $p_{\text{low}}=0.90$, $q_1=0.70$, $q_2=0.50$. Columns list $q_3\in\{0.05,0.10\}$. Averages based on $100$ graphs for each $(n,q_3)$.}\label{tab:two-type-3}
\end{table}
 
\subsection{A bipartite random digraph model}

We next study a bipartite random digraph model with three parameters:
\begin{itemize}
    \item The number of vertices, $n$.
    \item The proportion of vertices in the first bipartite set, denoted by $a$ where $a\in [0, 1]$.
    \item The probability $p$ that two vertices, one in each of the two partitions, are connected by an arc in a given direction.   
\end{itemize}

\noindent Let $D$ be the resulting bipartite digraph with $V(D)=A\cup B$, where $A$ and $B$ are the two bipartite sets. We set $n_1=|A|=\lceil n\cdot a\rceil$ and $n_2=|B|=n-n_1$. For each ordered pair $(u,v)$ with $u\in A$ and $v\in B$, include the arc $u\to v$ with probability $p$. Independently, for $(v,u)$ with $v\in B$ and $u\in A$, include the arc $v\to u$ with probability $p$. Both directions may occur since 2-cycles are permitted.

We observed the largest improvement when the value of $a$ was small. This is consistent with our earlier observations, because the smaller the value of $a$, the more imbalanced the bipartite graph is. When $a=0.5$, the bipartite graph would have uniform degrees, where we expect no improvement between our bounds and \eqref{eq:AGJS}. Tables \ref{tab:bipartite-1}, \ref{tab:bipartite-2}, and \ref{tab:bipartite-3} show the value of these improvements in a random computer simulation of the bipartite digraph model run with the indicated parameter values. 

\begin{table}[ht]
\centering
\begin{tabular}{|>{\centering\arraybackslash}p{1.5cm}|
                >{\centering\arraybackslash}p{1.5cm}|
                >{\centering\arraybackslash}p{1.5cm}|
                >{\centering\arraybackslash}p{1.5cm}|>{\centering\arraybackslash}p{1.5cm}|>{\centering\arraybackslash}p{1.5cm}|>{\centering\arraybackslash}p{1.5cm}|>{\centering\arraybackslash}p{1.5cm}|}
\hline
$a \rightarrow$ & \multicolumn{3}{|c|}{$0.05$} & \multicolumn{3}{|c|}{$0.10$} \\ \hline
$n \downarrow$ & \hyperref[eq:AGJS]{AGJS} & $\Delta_{\text{neigh}}$ &  $\Delta_{\text{var}}$ & \hyperref[eq:AGJS]{AGJS} & $\Delta_{\text{neigh}}$ &  $\Delta_{\text{var}}$ \\ \hline
100 & 30.82 & 16.94 & 6.09 & 16.12 & 10.01 & 4.89 \\ \hline
150 & 30.59 & 20.62 & 8.04 & 16.61 & 11.28 & 5.66 \\ \hline
200 & 33.55 & 24.30 & 9.56 & 16.87 & 11.97 & 6.09 \\ \hline
\end{tabular}
\caption{Bipartite random digraphs with edge probability $p=0.65$. Columns list $a\in\{0.05,0.10\}$. Averages based on $100$ graphs for each $(n,a)$.}\label{tab:bipartite-1}
\end{table}

\begin{table}[ht]
\centering
\begin{tabular}{|>{\centering\arraybackslash}p{1.5cm}|
                >{\centering\arraybackslash}p{1.5cm}|
                >{\centering\arraybackslash}p{1.5cm}|
                >{\centering\arraybackslash}p{1.5cm}|>{\centering\arraybackslash}p{1.5cm}|>{\centering\arraybackslash}p{1.5cm}|>{\centering\arraybackslash}p{1.5cm}|>{\centering\arraybackslash}p{1.5cm}|}
\hline
$a \rightarrow$ & \multicolumn{3}{|c|}{$0.15$} & \multicolumn{3}{|c|}{$0.20$} \\ \hline
$n \downarrow$ & \hyperref[eq:AGJS]{AGJS} & $\Delta_{\text{neigh}}$ &  $\Delta_{\text{var}}$ & \hyperref[eq:AGJS]{AGJS} & $\Delta_{\text{neigh}}$ &  $\Delta_{\text{var}}$ \\ \hline
100 & 10.65 & 5.99 & 3.77 & 7.88 & 3.62 & 2.95 \\ \hline
150 & 10.65 & 6.31 & 4.07 & 7.99 & 3.88 & 3.18 \\ \hline
200 & 10.98 & 6.81 & 4.36 & 8.06 & 4.02 & 3.29 \\ \hline
\end{tabular}
\caption{Bipartite random digraphs with edge probability $p=0.65$. Columns list $a\in\{0.15,0.20\}$. Averages based on $100$ graphs for each $(n,a)$.}\label{tab:bipartite-2}
\end{table}

\begin{table}[ht]
\centering
\begin{tabular}{|>{\centering\arraybackslash}p{1.5cm}|
                >{\centering\arraybackslash}p{1.5cm}|
                >{\centering\arraybackslash}p{1.5cm}|
                >{\centering\arraybackslash}p{1.5cm}|>{\centering\arraybackslash}p{1.5cm}|>{\centering\arraybackslash}p{1.5cm}|>{\centering\arraybackslash}p{1.5cm}|>{\centering\arraybackslash}p{1.5cm}|}
\hline
$a \rightarrow$ & \multicolumn{3}{|c|}{$0.25$} & \multicolumn{3}{|c|}{$0.30$} \\ \hline
$n \downarrow$ & \hyperref[eq:AGJS]{AGJS} & $\Delta_{\text{neigh}}$ &  $\Delta_{\text{var}}$ & \hyperref[eq:AGJS]{AGJS} & $\Delta_{\text{neigh}}$ &  $\Delta_{\text{var}}$ \\ \hline
100 &  6.25 & 2.06 & 2.32 & 5.21 & 0.97 & 1.81 \\ \hline
150 &  6.23 & 2.12 & 2.42 & 5.26 & 1.05 & 1.90 \\ \hline
200 &  6.35 & 2.28 & 2.53 & 5.28 & 1.10 & 1.95\\ \hline
\end{tabular}
\caption{Bipartite random digraphs with edge probability $p=0.65$. Columns list $a\in\{0.25,0.30\}$. Averages based on $100$ graphs for each $(n,a)$.}\label{tab:bipartite-3}
\end{table}

\subsection{Scale-free random digraph models}\label{subsec:scale-free}

Finally, we consider two scale-free random graph models. These models are designed so that vertices have a probability of attachment proportional to their degree.

\textit{The Barab\'asi--Albert (BA) model.} The BA model is constructed from a fixed parameter $m$. At each iteration, a new vertex is introduced and connected to $m$ existing vertices, with attachment probabilities proportional to vertex degrees. The process terminates when the desired graph order $n$ is reached. Since the BA model is designed for undirected graphs, we adapt it for digraphs by first constructing the undirected graph and then assigning a uniformly random direction to each edge independently.

\textit{The Krapivsky--Rodgers--Redner (KRR) model.} The KRR model~\cite{krapivsky2001}, formulated directly for digraphs, also builds a graph iteratively. At each step~$t$, the model either (a) adds a new vertex with probability~$p$, which attaches to an existing vertex via a directed edge, or (b) adds a directed edge between two existing vertices with probability $1-p$. In both cases, attachment probabilities are proportional to vertex degrees.

The experimental results for both scale-free models are reported in Tables~\ref{tab:BA-1}--\ref{tab:BA-2} and Tables~\ref{tab:KRR-1}--\ref{tab:KRR-2}. Neither the neighborhood-based bound nor the variance-based bound shows a meaningful improvement over the AGJS bound~\eqref{eq:AGJS}.

\medskip 

\begin{table}[ht]
\centering
\begin{tabular}{|>{\centering\arraybackslash}p{1.5cm}|
                >{\centering\arraybackslash}p{1.5cm}|
                >{\centering\arraybackslash}p{1.5cm}|
                >{\centering\arraybackslash}p{1.5cm}|>{\centering\arraybackslash}p{1.5cm}|>{\centering\arraybackslash}p{1.5cm}|>{\centering\arraybackslash}p{1.5cm}|>{\centering\arraybackslash}p{1.5cm}|}
\hline
$m \rightarrow$ & \multicolumn{3}{|c|}{$1$} & \multicolumn{3}{c|}{$5$} \\ \hline
$n \downarrow$ & \hyperref[eq:AGJS]{AGJS} & $\Delta_{\text{neigh}}$ &  $\Delta_{\text{var}}$ & \hyperref[eq:AGJS]{AGJS} & $\Delta_{\text{neigh}}$ &  $\Delta_{\text{var}}$ \\ \hline
100 & 88.98 & 0.00 & 0.05 & 35.22 & 0.00 & 0.50 \\ \hline
200 & 178.16 & 0.00 & 0.05  & 70.57 & 0.02 & 0.55 \\ \hline
300 & 267.59 & 0.00 & 0.05 & 105.76 & 0.05 & 0.60 \\ \hline
\end{tabular}
\caption{Barab\'asi--Albert digraphs. Averages based on $100$ graphs for each $(n,m)$.}\label{tab:BA-1}
\end{table}

\begin{table}[ht]
\centering
\begin{tabular}{|>{\centering\arraybackslash}p{1.5cm}|
                >{\centering\arraybackslash}p{1.5cm}|
                >{\centering\arraybackslash}p{1.5cm}|
                >{\centering\arraybackslash}p{1.5cm}|>{\centering\arraybackslash}p{1.5cm}|>{\centering\arraybackslash}p{1.5cm}|>{\centering\arraybackslash}p{1.5cm}|>{\centering\arraybackslash}p{1.5cm}|}
\hline
$m \rightarrow$ & \multicolumn{3}{|c|}{$10$} & \multicolumn{3}{c|}{$20$} \\ \hline
$n \downarrow$ & \hyperref[eq:AGJS]{AGJS} & $\Delta_{\text{neigh}}$ &  $\Delta_{\text{var}}$ & \hyperref[eq:AGJS]{AGJS} & $\Delta_{\text{neigh}}$ &  $\Delta_{\text{var}}$ \\ \hline
100 & 18.81 & 0.00 & 0.57 & 10.32 & 0.01 & 0.58 \\ \hline
200 & 37.15 & 0.01 & 0.65  & 19.17 & 0.01 & 0.64 \\ \hline
300 & 55.84 & 0.01 & 0.70 & 28.55 & 0.00 & 0.69 \\ \hline
\end{tabular}
\caption{Barab\'asi--Albert digraphs. Averages based on $100$ graphs for each $(n,m)$.}\label{tab:BA-2}
\end{table}

\begin{table}[ht]
\centering
\begin{tabular}{|>{\centering\arraybackslash}p{1.5cm}|
                >{\centering\arraybackslash}p{1.5cm}|
                >{\centering\arraybackslash}p{1.5cm}|
                >{\centering\arraybackslash}p{1.5cm}|>{\centering\arraybackslash}p{1.5cm}|>{\centering\arraybackslash}p{1.5cm}|>{\centering\arraybackslash}p{1.5cm}|>{\centering\arraybackslash}p{1.5cm}|}
\hline
$p \rightarrow$ & \multicolumn{3}{|c|}{$0.2$} & \multicolumn{3}{c|}{$0.4$} \\ \hline
$t \downarrow$ & \hyperref[eq:AGJS]{AGJS} & $\Delta_{\text{neigh}}$ &  $\Delta_{\text{var}}$ & \hyperref[eq:AGJS]{AGJS} & $\Delta_{\text{neigh}}$ &  $\Delta_{\text{var}}$ \\ \hline
100 & 11.53 & 0.04 & 0.18 & 30.71 & 0.03 & 0.12 \\ \hline
200 & 24.05 & 0.13 & 0.17  & 59.75 & 0.05 & 0.13 \\ \hline
300 & 37.56 & 0.20 & 0.17 & 91.35 & 0.05 & 0.13 \\ \hline
\end{tabular}
\caption{KRR digraphs with $t$ iterations. Averages based on $100$ graphs for each $(t,p)$.}\label{tab:KRR-1}
\end{table}

\begin{table}[ht]
\centering
\begin{tabular}{|>{\centering\arraybackslash}p{1.5cm}|
                >{\centering\arraybackslash}p{1.5cm}|
                >{\centering\arraybackslash}p{1.5cm}|
                >{\centering\arraybackslash}p{1.5cm}|>{\centering\arraybackslash}p{1.5cm}|>{\centering\arraybackslash}p{1.5cm}|>{\centering\arraybackslash}p{1.5cm}|>{\centering\arraybackslash}p{1.5cm}|}
\hline
$p \rightarrow$ & \multicolumn{3}{|c|}{$0.6$} & \multicolumn{3}{c|}{$0.8$} \\ \hline
$t \downarrow$ & \hyperref[eq:AGJS]{AGJS} & $\Delta_{\text{neigh}}$ &  $\Delta_{\text{var}}$ & \hyperref[eq:AGJS]{AGJS} & $\Delta_{\text{neigh}}$ &  $\Delta_{\text{var}}$ \\ \hline
100 & 49.71 & 0.02 & 0.10 & 69.22 & 0.01 & 0.10 \\ \hline
200 & 96.87 & 0.02 & 0.12  & 136.23 & 0.00 & 0.11 \\ \hline
300 & 144.94 & 0.01 & 0.13 & 203.34& 0.00 & 0.12 \\ \hline
\end{tabular}
\caption{KRR digraphs with $t$ iterations. Averages based on $100$ graphs for each $(t,p)$.}\label{tab:KRR-2}
\end{table}

\bibliographystyle{amsplain}
\bibliography{references}

\end{document}